\documentclass[12pt]{amsart}

\newtheorem{theorem}{Theorem}[section]
\newtheorem{lemma}[theorem]{Lemma}

\theoremstyle{definition}

\theoremstyle{remark}

\begin{document}
\title[On Pseudo-Einstein Real Hypersurfaces]{On Pseudo-Einstein Real Hypersurfaces}
\author{MAYUKO KON}
\address{Faculty of Education, Shinshu University, 6-Ro, Nishinagano, Nagano City 380-8544, Japan}
\email{mayuko$\_$k@shinshu-u.ac.jp}
\subjclass{Primary 53C25; Secondary 53B25}

\maketitle
\begin{abstract}
Let $M$ be a real hypersurface of a complex space form $M^n(c)$, $c\neq0$, $n\geq 3$. We show that the Ricci tensor $S$ of $M$ satisfies $S(X,Y)=ag(X,Y)$ for any vector fields $X$ and $Y$ on the holomorphic distribution, $a$ being a constant, if and only if $M$ is a pseudo-Einstein real hypersurface.
\end{abstract}
\vspace{5mm}

\section[1]{Introduction}
 A Riemannian manifold is said to be Einstein if the Ricci tensor $S$ is a constant multiple of the metric tensor, that is, $S=\rho g$. In the theory of subspace, Fialkow \cite{Fi} classified Einstein hypersurfaces in spaces of constant curvature (see also Ryan \cite{Ry}).

On the other hand,  it is known that complex space forms with non-zero constant holomorphic sectional curvature  do not admit Einstein real hypersurfaces. However, as Kon\cite{Ko} discovered, there is a nice class of real hypersurfaces satisfying $S(X,Y) = a g(X,Y) + b\eta(X)\eta(Y)$ for all tangent vectors $X$ and $Y$, where $S$ is the Ricci tensor and $a$, $b$ are constants. Here a 1-form $\eta$ is defined by $\eta(X)=g(X,\xi)$, where $\xi$ is the structure vector field. Such real hypersurfaces are said to be pseudo-Einstein. For $n \geq 3$, the pseudo-Einstein real hypersurfaces were classified by Kon\cite{Ko}  for the complex projective space $CP^n$ and by Montiel \cite{Mo} for the complex hyperbolic space $CH^n$ (see also Cecil and Ryan \cite{CR}).  For $n = 2$, the classification problem completed by Kim and Ryan \cite{KR}, Ivey and Ryan \cite{IR}.

The purpose of this paper is to study the following condition for the Ricci tensor $S$ of a real hypersurface $M$ :
$$S(X,Y)=ag(X,Y), \hspace {1cm} X, Y \in H$$
where $a$ is constant and $H$ denotes the {\it holomorphic distribution} on $M$ defined by $H(x) =\{X\in T_x(M)|\eta(X)=0\}$.

If $S$ satisfies the pseudo-Einstein condition, then it satisfies the condition above.  An orthogonal splitting of the tangent space of $M$ is given by $T(M)= {\rm span}\{\xi\} \oplus H$.  We study the Ricci tensor $S$ with respect to the condition on $H$ and prove the following\\

\noindent{\textbf{Theorem.} \textit{Let $M$ be a real hypersurface of a complex space form $M^n(c)$, $c\neq 0$, $n\geq 3$. The Ricci tensor $S$ of $M$ satisfies $S(X,Y)=ag(X,Y)$ for any $X,Y\in H$, $a$ being a constant, if and only if $M$ is a pseudo-Einstein real hypersurface.}

\section[2]{Preliminaries} 

In this section we prepare some basic formulas for real hypersurfaces of complex space forms. For the general theory of real hypersurfaces we refer to Niebergall and Ryan \cite{NR}.

 Let $M^n(c)$ denote the complex space form of complex dimension $n$ (real dimension $2n$) with constant holomorphic sectional curvature $4c$. We denote by $J$ the almost complex structure of $M^n(c)$. The Hermitian metric of $M^n(c)$ will be denoted by $G$.

Let $M$ be a real $(2n-1)$-dimensional hypersurface immersed in $M^n(c)$.  We denote by $g$ the Riemannian metric induced on $M$ from $G$. We take the unit normal vector field $N$ of $M$ in $M^n(c)$. Then the {\it structure vector field} $\xi$ on $M$ is defined so that
$$ \xi =-JN,  \hspace{1cm} \xi \in T(M).$$
This gives an orthogonal splitting of the tangent space
$$T(M)= {\rm span}\{\xi\} \oplus H.$$
On the tangent space we define a linear operator $\phi$ :
$$\phi X = JX - g(X,\xi)\xi, \hspace {1cm} \phi : T(M) \longrightarrow T(M).$$
Second, we define a 1-form $\eta$ by $\eta(X)=g(X,\xi)$, then
\begin{equation*}\label{xx}
\begin{split}
&\eta(\phi X)=0,  \quad \phi \xi=0,\\
&g(\phi X,Y)+g(X,\phi Y) = 0,\\
&g(\phi X,\phi Y)=g(X,Y)-\eta(X)\eta(Y). 
\end{split}
\end{equation*}
Thus $(\phi,\xi,\eta,g)$ defines an almost contact metric structure on $M$.

We denote by $\tilde{\nabla}$ the operator of covariant differentiation in $M^n(c)$, and by $\nabla$ the one in $M$ determined by the induced metric. Then the {\it Gauss and Weingarten formulas} are given respectively by
\begin{equation*}
\tilde{\nabla}_XY={\nabla}_XY+g(AX,Y)N,  \hspace{1cm} \tilde{\nabla}_XN = -AX
\end{equation*}
for any vector fields $X$ and $Y$ tangent to $M$. We call $A$ the {\it shape operator} of $M$.

For the contact metric structure on $M$, we have
\begin{equation*}
{\nabla}_X\xi=\phi AX, \hspace{1cm} ({\nabla}_X\phi)Y=\eta(Y)AX-g(AX,Y)\xi.
\end{equation*}
From this we obtain
\begin{equation*}
g({\nabla}_X\phi Y,Z)=g(\phi{\nabla}_X Y,Z),
\end{equation*}
where $X\in T(M)$ and $Y, Z\in H$.

We denote by $R$ the Riemannian curvature tensor field of $M$. Then the {\it equation of Gauss} is given by
\begin{equation*}\label{xx}
\begin{split}
R(X,Y)Z&= c(g(Y,Z)X - g(X,Z)Y + g(\phi Y,Z)\phi X\\
       &\quad - g(\phi X,Z)\phi Y - 2g(\phi X,Y)\phi Z )\\
       &\quad + g(AY,Z)AX - g(AX,Z)AY,
\end{split}
\end{equation*}
and the {\it equation of Codazzi} by
\begin{equation*}
(\nabla_XA)Y-(\nabla_YA)X = c(\eta(X)\phi Y - \eta(Y)\phi X - 2g(\phi X, Y)\xi).
\end{equation*}
From the equation of Gauss, the Ricci tensor $S$ of $M$ is given by
\begin{equation}\label{1}
\begin{split}
S(X,Y)&=(2n+1)cg(X,Y)-3c\eta (X)\eta (Y) \\
      & \quad+ {\mathrm{tr}}Ag(AX,Y) -g(AX,AY),
\end{split}
\end{equation}
where ${\mathrm{tr}}A$ is the trace of $A$.

A hypersurface $M$ of a complex space form  $M^n(c)$ is called a {\it  Hopf hypersurface} if the structure vector field $\xi$ is a principal vector, that is, $A\xi = \alpha\xi$, $\alpha=g(A\xi,\xi)$. We define the subspace $L(x)\subset T_x(M)$ as the smallest subspace that contains $\xi$ and is invariant under the shape operator $A$. Then $M$ is Hopf if and only if $L(x)$ is one-dimensional at each point $x$. 

We recall the notion of {\it pseudo-Einstein real hypersurfaces}. A real hypersurface $M$ of a complex space form  $M^n(c)$ is said to be pseudo-Einstein if there are constants $a$ and $b$ such that the Ricci tensor $S$ of $M$ satisfies
$$S(X,Y)=ag(X,Y)+b\eta(X)\eta(Y)$$
for all tangent vectors $X$ and $Y$. We remark that any pseudo-Einstein real hypersurface satisfies that $S(X,\xi)=0$ for all $X\in H$. This means that the structure vector field $\xi$ is an eigenvector field of the Ricci tensor of type (1,1). Such a hypersurface was studied by Kon \cite{MaK}.
 
We consider the condition that the Ricci tensor $S$ of $M$ satisfies
$$S(X,Y) = ag(X,Y), \hspace{1cm} X, Y \in H,$$
where $a$ is a constant. If $M$ is pseudo-Einstein, then it satisfies this condition. So it is weaker than that of pseudo-Einstein.  Our condition is equivalent to that $S(\phi^2 X,\phi^2Y)=ag(\phi^2 X,\phi^2 Y)$ for any vector fields $X$ and $Y$ or equivalently
\begin{equation*}
\begin{split}
S(X,Y)&=\eta(X)S(\xi,Y) + \eta(Y)S(X,\xi) -\eta(X)\eta(Y) S(\xi,\xi)\\
         & \quad +ag(X,Y) - a\eta(X)\eta(Y).
\end{split}
\end{equation*}
So the pseudo-Einstein condition is equivalent to that $S(X,Y) = ag(X,Y)$, $S(X,\xi)=0, X, Y \in H$.

However, our result states that the condition $S(X,Y) = ag(X,Y),  X, Y \in H$ is equivalent to the pseudo-Einstein condition. Therefore, pseudo-Einstein real hypersurface are determined the condition on the holomorphic distribution $H$ on $M$.

\section[3]{A condition on the Ricci tensor}

Let $M$ be a connected real hypersurface of $M^n(c)$ $(n\geq 3, c\neq 0)$. We consider the symmetric tensor field $\phi A\phi$ of type (1,1) on $M$. As a point $x$ of $M$ we take an orthonormal basis $\{\xi, v_1,\hdots,v_{2n-2}\}$ in the tangent space  $T_{x}(M)$ at $x \in M$ such that
$$\phi A\phi \xi=0, \hspace{1cm} \phi A\phi v_i=-a_i v_i, \hspace{1cm}1 \leq i \leq 2n-2.$$
Then
$$A\phi v_i = a_i \phi v_i + \eta(A\phi v_i)\xi,  \hspace{1.0cm}  1\leq i \leq 2n-2.$$
Suppose that the Ricci tensor $S$ of $M$ satisfies a condition
$$S(X,Y)=ag(X,Y), \hspace{1cm} X, Y\in H, $$
$a$ being a constant. Then  (\ref{1}) implies
$$S(\phi v_i,\phi v_j) = - \eta(A\phi v_i)\eta(A\phi v_j) = 0$$
for $i\neq j, 1\leq i, j \leq 2n-2$. So we may assume
$$\eta(A\phi v_j) = 0,  \hspace{1.0cm}  2\leq i \leq 2n-2.  $$

We notice that ${\rm dim}L(x) \leq 2$. If  ${\rm dim}L(x) = 2$, then $L(x)$ is spanned by $\xi, A\xi$.

Taking a new orthonormal basis $\{ \xi, e_1 = \phi v_1,\hdots,e_{2n-2}=\phi v_{2n-2} \} $, we obtain
$$A\xi = \alpha\xi + h_1 e_1, \hspace{1.0cm} Ae_1=a_1 e_1 + h_1 \xi,$$
$$Ae_i = a_i e_i, \hspace{1.0cm} 2\leq i \leq 2n-2,$$
where $h_1 = \eta(Ae_1)$. Then, from the assumption on the Ricci tensor $S$, we have
$$(2n+1)c + ({\rm tr}A)a_1 -(a_1^2+h_1^2)=a,$$
$$(2n+1)c + ({\rm tr}A)a_i -a_i^2=a, \hspace{1.0cm} 2\leq i \leq 2n-2.  $$
Each of $a_2, \hdots, a_{2n-2}$ is a root of the quadratic equation
\begin{equation}\label{3}
X^2 -({\rm tr}A)X + a- (2n+1)c = 0.
\end{equation}
Thus at most two $a_{i}^{, }s,  2\leq i \leq 2n-2,$ can be distinct at each point. Let us denote them by $\beta$ and $\gamma$.

The argument above applies to each point $x$ of $M$. Since ${\rm tr}A$ is differentiable, it follows that roots of  the quadratic equation are differentiable functions.

If $M$ is Hopf, then $M$ is a pseudo-Einstein. Therefore, in the following, we assume that $M$ is not Hopf. We work in an open set where $A\xi -\alpha\xi$ does not vanish, that is, $A\xi = \alpha\xi + h_1 e_1$, $h_1$ being a nonvanishing function and $e_1$ is a unit vector field orthogonal to $\xi$, $\eta(e_1)=0$. We notice that $\alpha=g(A\xi,\xi)$ and $a_1=g(Ae_1,\xi)=\eta(Ae_1)$ are differentiable.

Let us restrict ourselves to a neighborhood of a point $x$ where $\beta\neq \gamma$. We assume that $\beta$ appears $p$ times and $\gamma$ appears $2n-3-p$ times. By the quadratic equation above, we obtain

\begin{equation}\label{3}\begin{split}
& {\mathrm{tr}}A=\beta+\gamma,\\
& a-(2n+1)c=\beta\gamma.
\end{split}\end{equation}
Since we have 
$${\rm tr}A=\alpha + a_1 + p\beta + (2n-3-p)\gamma,$$
it follows
$$\alpha + a_1 + (2n-5)\gamma + (p-1)(\beta - \gamma) = 0.$$
From this we see that $p$ is a constant. We also have
\begin{equation}\label{4}
h_1^2=({\mathrm{tr}}A)(a_1-\beta)-(a_1^2-\beta^2)
\end{equation}
and, by (\ref{3}),
\begin{equation}\label{5}
h_1^2=(a_1-\beta)(\gamma-a_1).
\end{equation}

We define two distributions $T_{\beta}$ and $T_{\gamma}$ as follows:
$$ T_{\beta}(x)=\{X\in T_{x}(M)\mid AX=\beta X \}$$
$$ T_{\gamma}(x)=\{X\in T_{x}(M)\mid AX=\gamma X \}.$$
We take a local orthonormal basis $\{\xi, e_1, e_2, \hdots, e_{2n-2} \}$ such that
$$A\xi = \alpha\xi + h_1 e_1, \hspace{1cm}Ae_1 = a_1 e_1+ h_1 \xi,$$
$$ Ae_i = \beta e_i, \hspace{1cm} 2\leq i \leq p+1, $$
$$Ae_j = \gamma e_j, \hspace{1cm} p+2\leq j \leq 2n-2.$$ 
Then, $\{e_2, \hdots, e_{p +1} \}$ is an orthonormal basis for $T_{\beta}$ and  $\{e_{p+2}, \hdots, e_{2n-2} \}$ is an orthonormal basis for $T_{\gamma}$. We see
$$T_x (M) = L(x)\oplus T_{\beta}(x) \oplus T_{\gamma}(x)$$
at each point $x$ of $M$.

Here, using the equation of Codazzi, we prepare some basic formulas:

\begin{lemma}
With respect to a local orthonormal basis $\{\xi, e_1,\cdots,e_{2n-2}\}$, we have
\begin{eqnarray}
& &(a_j-a_k)g(\nabla_{e_i}e_j,e_k)-(a_i-a_k)g(\nabla_{e_j}{e_i},e_k)=0,\label{6}\\
& &(a_j-a_1)g(\nabla_{e_i}e_j,e_1)-(a_i-a_1)g(\nabla_{e_j}e_i,e_1)\label{7}\\
& &\quad +h_1(a_i+a_j)g(\phi e_i,e_j)=0,\nonumber\\
& &\{2c-2a_ia_j+\alpha(a_i+a_j)\}g(\phi e_i,e_j) - h_1 g(\nabla_{e_i}e_j,e_1) \label{8}\\
& &\quad+ h_1g(\nabla_{e_j}e_i,e_1)=0,\nonumber\\
& &(a_j-a_i)g(\nabla_{e_i}e_j, e_i)- (e_ja_i)=0,\label{9}\\
& &(a_1-a_i)g(\nabla_{e_i}e_1,e_i)-(e_1a_i)=0,\label{10}\\
& &(a_1-a_j)g(\nabla_{e_i}e_1,e_j)+(a_j-a_i)g(\nabla_{e_1}e_i,e_j)\label{11}\\
& &\quad+ a_ih_1 g(\phi e_i,e_j)=0,\nonumber\\
& &(2c-2a_1a_i + \alpha(a_i+a_1))g(\phi e_i,e_1) \label{12}\\
& &\quad+ h_1 g(\nabla_{e_1}e_i,e_1)+(e_ih_1)=0,\nonumber\\
& &h_1(2a_i+a_1)g(\phi e_i, e_1) + (a_1-a_i)g(\nabla_{e_1}e_i, e_1) \label{13}\\
& &\quad+ (e_ia_1)=0,\nonumber\\
& &h_1 g(\nabla_{e_i}e_1,e_i)- (\xi a_i)=0,\label{14}\\
& &(c + a_i \alpha - a_ia_j) g(\phi e_i, e_j) + h_1 g(\nabla_{e_i}e_1, e_j)\label{15}\\
& & \quad+ (a_j-a_i) g(\nabla_{\xi}e_i,e_j)=0,\nonumber\\
& &(c +a_i\alpha - a_ia_1 + h_1^2)g(\phi e_i,e_1) + (a_1-a_i)g(\nabla_{\xi}e_i,e_1)\label{16}\\
& &\quad +(e_ih_1)=0,\nonumber\\
& &h_1 (\alpha - 3a_i) g( e_1,\phi e_i) + h_1 g(\nabla_{\xi}e_i, e_1)\label{17}\\
& &\quad + (e_i\alpha) =0,\nonumber\\
& &(e_1h_1)- (\xi a_1)=0,\label{28}\\
& &(e_1\alpha) - (\xi h_1)=0,\label{19}\\
& &(c+a_1\alpha - a_1a_i - h_1^2) g(\phi e_1,e_i) - (a_1-a_i) g(\nabla_{\xi}e_1, e_i) \label{20}\\
& &\quad+ h_1 g(\nabla_{e_1}e_1, e_i)=0,\nonumber
\end{eqnarray}
where $i\neq j$, $j\neq k$, $k\neq i$, $i,j,k\geq 2$, and $a_i, a_j, a_k$  are $\beta$  or $\gamma$.
\end{lemma}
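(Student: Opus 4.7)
The plan is to apply the equation of Codazzi to pairs of vectors from the orthonormal frame $\{\xi, e_1, e_2, \ldots, e_{2n-2}\}$, project the resulting identity onto each frame vector, and read off the listed identities. The key auxiliary fact, derived from $\nabla_X\xi=\phi AX$, is
\[
g(\nabla_X Y,\xi)=-g(Y,\phi AX)\qquad\text{for all } X,Y\in T(M),
\]
which, together with the block form of $A$, reduces every expression $g(\nabla_{e_i}e_j,\xi)$ to something of the shape $-a_i g(\phi e_i,e_j)$ for $i\geq 2$ and a similar expression involving $\alpha$ and $h_1$ when $e_i=\xi$ or $e_i=e_1$.

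First I compute $(\nabla_X A)Y=\nabla_X(AY)-A(\nabla_X Y)$ in the frame. Since $Ae_k=a_k e_k$ for $k\geq 2$, while $A\xi=\alpha\xi+h_1 e_1$ and $Ae_1=a_1 e_1+h_1\xi$, the only unknowns appearing after expansion are the Christoffel coefficients $g(\nabla_{e_i}e_j,e_k)$ and the directional derivatives of $\alpha,a_1,h_1,\beta,\gamma$. I then substitute into
\[
(\nabla_X A)Y-(\nabla_Y A)X=c\bigl(\eta(X)\phi Y-\eta(Y)\phi X-2g(\phi X,Y)\xi\bigr)
\]
for the four families of pairs $(X,Y)=(e_i,e_j)$ with $i\neq j$, $i,j\geq 2$; $(X,Y)=(e_1,e_i)$ with $i\geq 2$; $(X,Y)=(\xi,e_i)$ with $i\geq 2$; and $(X,Y)=(\xi,e_1)$. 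For each pair I take inner products against the frame vectors. Concretely, $(e_i,e_j)$ projected onto $e_k$, $e_1$, $\xi$, $e_i$ yields (6), (7), (8), (9) respectively; $(e_1,e_i)$ projected onto $e_i$, $e_j$, $\xi$, $e_1$ yields (10), (11), (12), (13); $(\xi,e_i)$ projected onto $e_i$, $e_j$, $e_1$, $\xi$ yields (14), (15), (16), (17); and $(\xi,e_1)$ projected onto $e_1$, $\xi$, $e_i$ yields (18), (19), (20).

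The computation has no conceptual obstacle; the difficulty is entirely bookkeeping. Three sources of cross-terms must be tracked carefully. The skew-symmetry $g(\phi X,Y)=-g(X,\phi Y)$ controls the signs in front of the various $g(\phi e_i,e_j)$. The normalization $g(\nabla_X e_k,e_k)=0$ for a unit field $e_k$ is what isolates the directional derivatives $(e_j a_i)$, $(e_1 a_i)$, $(\xi a_i)$, $(e_1 h_1)$, $(\xi h_1)$ appearing in (9), (10), (14), (18), (19). Finally, because $\xi$ and $e_1$ are coupled by the nonvanishing $h_1$, projecting onto $\xi$ or $e_1$ produces extra terms such as $-h_1\, g(\nabla_{e_i}e_j,e_1)$ and $-h_1\, g(\nabla_{e_i}e_j,\xi)=a_i h_1\, g(\phi e_i,e_j)$; these, combined with the derivatives of $\alpha,a_1,h_1$, are exactly what produce the richer right-hand sides in (8), (12), (13), (16), (17), and (20). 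Carrying out the projections in the order above exhausts the list and completes the proof.
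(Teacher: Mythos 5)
Your proposal is correct and is precisely the argument the paper intends: the lemma is stated as a consequence of the Codazzi equation, and substituting the frame pairs $(e_i,e_j)$, $(e_1,e_i)$, $(\xi,e_i)$, $(\xi,e_1)$ into it and projecting onto $\xi,e_1,e_i,e_j,e_k$, using $A\xi=\alpha\xi+h_1e_1$, $Ae_1=a_1e_1+h_1\xi$, $Ae_k=a_ke_k$ and $g(\nabla_{X}e_j,\xi)=-g(e_j,\phi AX)$, reproduces (6)--(20) exactly as you assign them. (Only a cosmetic caveat: your auxiliary identity $g(\nabla_XY,\xi)=-g(Y,\phi AX)$ holds as stated only for fields $Y$ with $\eta(Y)$ constant, which is how you use it on the frame vectors.)
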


\begin{lemma}
Let $M$ be a non-Hopf real hypersurface of $M^n(c)$, $n\geq 3$, $c\neq 0$. Suppose that the Ricci tensor $S$ satisfies $S(X,Y)=ag(X,Y)$ for any $X, Y\in H$, $a$ being a constant. If $\beta\neq \gamma$, then the orthonormal basis $\{\xi, e_1,\cdots,e_{2n-2},\}$ satisfies that
$g(\phi e_x, e_y)=0$  for any $e_x,\ e_y\in T_\beta$ and $g(\phi e_s,e_t)=0$ for any $e_s,\ e_t\in T_{\gamma}.$
\end{lemma}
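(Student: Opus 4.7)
The plan is to apply equations (\ref{7}) and (\ref{8}) of the preceding lemma specialized to $a_i = a_j = \beta$ for a pair $e_i, e_j \in T_\beta$ with $i \ne j$, and symmetrically to a pair in $T_\gamma$. Writing $G := g(\phi e_i, e_j)$ and $F := g(\nabla_{e_i} e_j, e_1) - g(\nabla_{e_j} e_i, e_1)$, (\ref{7}) becomes $(\beta - a_1) F + 2\beta h_1 G = 0$ and (\ref{8}) becomes $h_1 F = 2(c - \beta^2 + \alpha\beta)\, G$. Eliminating $F$ and substituting $h_1^2 = (a_1 - \beta)(\gamma - a_1)$ from (\ref{5}) together with $\mathrm{tr}\,A - \alpha - a_1 = p\beta + (2n-3-p)\gamma$ (which follows from summing the eigenvalues of $A$), I arrive at
\[
G\cdot(\beta - a_1)\bigl[\, c - \beta\bigl(p\beta + (2n-3-p)\gamma\bigr)\,\bigr] = 0.
\]

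The factor $\beta - a_1$ cannot vanish, for $\beta = a_1$ substituted into (\ref{5}) would give $h_1^2 = 0$, contradicting the standing assumption $h_1 \ne 0$. Hence either $G = 0$, as desired, or the bracket vanishes. Running the same argument on a pair in $T_\gamma$ produces $G'\cdot(\gamma - a_1)\bigl[c - \gamma(p\beta + (2n-3-p)\gamma)\bigr] = 0$ with $\gamma \ne a_1$ by the same reason. If both $G$ and $G'$ were nonzero for some pairs, subtracting the two bracket-vanishing relations yields $(\beta - \gamma)(p\beta + (2n-3-p)\gamma) = 0$; since $\beta \ne \gamma$, this forces $p\beta + (2n-3-p)\gamma = 0$, whence $c = 0$, contradicting our hypothesis $c \ne 0$. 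So the simultaneous-nonvanishing case is immediately excluded.

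The main obstacle is the remaining one-sided possibility, in which (say) the $T_\beta$-bracket vanishes, forcing $c = \beta(p\beta + (2n-3-p)\gamma)$, while $G' = 0$ on $T_\gamma$ without further constraint. To close this gap I will feed the relation $c = \beta(p\beta + (2n-3-p)\gamma)$ back into the remaining identities of the previous lemma, particularly (\ref{11}), (\ref{15}), and (\ref{16}) evaluated on cross-pair indices mixing $T_\beta$, $T_\gamma$ and $e_1$, and combine them with the constancy $e_j\beta = 0$ from (\ref{9}) and $\beta\gamma = a - (2n+1)c$ from (\ref{3}). This should produce a further algebraic relation incompatible with $G \ne 0$. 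The argument for $T_\gamma$ is then symmetric under the interchange $\beta \leftrightarrow \gamma$ and $p \leftrightarrow 2n-3-p$.
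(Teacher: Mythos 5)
Your first half is sound and in fact reproduces the paper's opening step by a slightly different route: the paper pairs (\ref{11}) with (\ref{15}) on two vectors of $T_\beta$ (resp.\ $T_\gamma$) and eliminates the connection term, while you pair (\ref{7}) with (\ref{8}) and eliminate the antisymmetrized term $F$; both give exactly the relation (\ref{21}), namely $c=\beta\{\beta+\gamma-(a_1+\alpha)\}$ when some $g(\phi e_x,e_y)\neq 0$ in $T_\beta$, the analogue with $\gamma$ for $T_\gamma$, and hence the exclusion of the ``both nonzero'' case via $\beta\neq\gamma\Rightarrow\beta+\gamma=a_1+\alpha\Rightarrow c=0$. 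Your use of $a_1\neq\beta,\gamma$ from $h_1\neq 0$ and (\ref{5}) is also fine.

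However, there is a genuine gap: the one-sided case (nonvanishing on $T_\beta$ but vanishing on $T_\gamma$, or vice versa) is precisely the hard part of the lemma, and you only announce a plan (``feed (\ref{21}) back into (\ref{11}), (\ref{15}), (\ref{16})\dots this should produce a further algebraic relation''). That plan, as stated, is unlikely to close the case: the Codazzi-derived identities (\ref{6})--(\ref{20}) are first-order relations in which the connection coefficients are free unknowns, and eliminating them on cross pairs does not by itself yield a contradiction with $g(\phi e_x,e_y)\neq 0$. What the paper actually does is second-order: it first shows that in this case $\beta$ and $\gamma$ are constant (combining (\ref{21}) with the constancy of $\beta\gamma=a-(2n+1)c$ and the trace identity), and then computes the sectional curvature $g(R(e_t,e_y)e_y,e_t)$ for $e_y\in T_\beta$, $e_t\in T_\gamma$ in two ways --- once from the definition of $R$ using the connection coefficients extracted from (\ref{6})--(\ref{17}) (leading to (\ref{23})), and once from the Gauss equation, which contributes the extra term $3c\,g(\phi e_y,e_t)^2+\beta\gamma+c$. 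Comparing the two, together with $g(\phi e_2,e_{p+2})=0$, gives $(a_1+\alpha-\beta)(a_1+\alpha-2\beta-\gamma)=0$ (equation (\ref{24})), and only the combination of (\ref{24}) with (\ref{21}) forces $\beta=\gamma$ or $\gamma=0$, contradictions. Without this curvature comparison (or an equivalent integrability argument), your proposal does not rule out case (II), so the proof is incomplete at its essential point.
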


\begin{proof}
By (\ref{11}) and (\ref{15}), we obtain
\begin{eqnarray*}
& &(a_1-\beta)g(\nabla_{e_x}e_1,e_y) + \beta h_1 g(\phi e_x, e_y)=0,\\
& &h_1 g(\nabla_{e_x}e_1, e_y) + (c+\alpha \beta - \beta^2) g(\phi e_x, e_y)=0.
\end{eqnarray*}
From these equations and (\ref{5}), we have
\begin{eqnarray*}
(a_1-\beta) \{\beta\gamma - \beta a_1-c-\alpha\beta + \beta^2\} g(\phi e_x,e_y)=0.
\end{eqnarray*}
Thus, if there exist $e_x$ and $e_y$ that satisfy $g(\phi e_x,e_y)\neq 0$, then we have
\begin{equation}\label{21}
c=\beta\{\beta +\gamma - (a_1+\alpha)\}.
\end{equation}
Similarly, if there exist $e_s$ and $e_t$ such that $g(\phi e_s, e_t)\neq 0$, then we obtain
\begin{equation*}
c=\gamma\{\gamma+\beta - (a_1+\alpha)\}.
\end{equation*}
Therefore the assumption $\beta \neq \gamma$ yields $\beta + \gamma=a_1+\alpha$, and hence $c=0$. This is a contradiction. So we have two cases:\\
\begin{itemize}
\item[(I)] We have $g(\phi e_x, e_y)=0$  for any $e_x,\ e_y\in T_\beta$ and $g(\phi e_s,e_t)=0$ for any $e_x,\ e_y\in T_{\gamma}.$
\item[(II)] There exist $e_x, e_y \in T_{\beta}$ such that $g(\phi e_x, e_y) \neq 0$, and for any $e_s, e_t \in T_{\gamma}$,  $g(\phi e_t, e_s) = 0$, or there exist $e_t, e_s \in T_{\gamma}$ such that $g(\phi e_t, e_s) \neq 0$, and for any $e_x, e_y \in T_{\gamma}$,  $g(\phi e_x, e_y) = 0$.\\
\end{itemize}

We shall show that the case (II) does not occur. To this purpose, it is sufficient to consider the case that there exist $e_x, e_y \in T_{\beta}$ such that $g(\phi e_x, e_y) \neq 0$, and  $g(\phi e_t, e_s) = 0$  for any $e_s, e_t \in T_{\gamma}$.\\

In the following we put $\phi e_1=\mu e_2 + \nu e_{p+2}$, $e_2\in T_\beta, e_{p+2}\in T_\gamma$,  by taking a suitable orthonormal basis.

There exist $e_x$ and $e_s$ such that $g(\phi e_x, e_s)\neq 0$. First we show that $\beta$ and $\gamma$ are constant. By (\ref{3}), 
\begin{equation*}
a=(2n+1)c+\beta {\rm{tr}}A  -\beta^2 = (2n+1)c + \beta\gamma.
\end{equation*}
Thus $\beta\gamma$ is constant.  On the other hand, since there exist $e_x$ and $e_y$ such that $g(\phi e_x, e_y)\neq 0$,  (\ref{3}) and  (\ref{21}) imply
\begin{eqnarray*}
c&=& \beta\{{\rm{tr}}A - (a_1+\alpha)\} \\
 &=& p\beta^2 + (2n-3-p)\beta\gamma.
\end{eqnarray*}
Since $\beta\gamma$ is constant, we see that $\beta$ is constant, and hence $\gamma$ is also constant.

We compute the right hand side of
\begin{eqnarray*}
& &g(R(e_t,e_y)e_y,e_t) \\
& &= g(\nabla_{e_t}\nabla_{e_y}e_y, e_t) - g(\nabla_{e_y}\nabla_{e_t} e_y, e_t) - g(\nabla_{[e_t,e_y]} e_y, e_t).
\end{eqnarray*}
for any $e_y\in T_{\beta}$ and $e_t\in T_{\gamma}$. Using (\ref{9}) and (\ref{10}), we have
\begin{equation}\label{22}\begin{split}
&g(\nabla_{e_x}e_x,e_s)=0,\quad g(\nabla_{e_s}e_s,e_x)=0,\\
&g(\nabla_{e_x}e_x, e_1)=0,\quad g(\nabla_{e_s}e_s, e_1)=0\
\end{split}\end{equation}
for any $e_x\in T_\beta$ and $e_s\in T_\gamma$.  Using these equations and $g(\nabla_{e_x}e_x,\xi)=g(\nabla_{e_s}e_s,\xi)=0$, we have $g(\nabla_{e_t}\nabla_{e_y}e_y, e_t)=-g(\nabla_{e_y}e_y,\nabla_{e_t}e_t)=0$. 

On the other hand,  for our orthonormal basis $\{\xi, e_1, e_x, e_s\}$, we compute
\begin{eqnarray*}
& &g(\nabla_{e_y}\nabla_{e_t}e_y,e_t)\\
& & = -g(\nabla_{e_t}e_y, \nabla_{e_y}e_t)\\
& &=-g(\nabla_{e_t}e_y,\xi)g(\xi, \nabla_{e_y}e_t) - g(\nabla_{e_t}e_y, e_1)g(e_1,\nabla_{e_y}e_t)\\
& &\quad -\sum_{x} g(\nabla_{e_t}e_y, e_x)g(e_x, \nabla_{e_y}e_t) -\sum_s g(\nabla_{e_t}e_y, e_s)g(e_s, \nabla_{e_y}e_t).
\end{eqnarray*}
When $y=z$, we have $g(e_z, \nabla_{e_y}e_t)=0$ by (\ref{22}). When $y\neq z$, by (\ref{6}),
\begin{equation*}
(\gamma-\beta)g(\nabla_{e_y}e_t, e_z)=0.
\end{equation*}
Since $\beta\neq \gamma$, we obtain $g(\nabla_{e_y}e_t, e_z)=0$. Hence we have 
\begin{equation*}
\sum_x g(\nabla_{e_t}e_y, e_x)g(e_x, \nabla_{e_y}e_t)=0.
\end{equation*}
 Similar computation using (\ref{6}) and (\ref{22}) gives $g(\nabla_{e_t}e_y, e_u)=0$. So we have
\begin{equation*}
g(\nabla_{e_y}\nabla_{e_t} e_y, e_t) = \beta\gamma g(\phi e_t, e_y)^2 - g(\nabla_{e_t}e_y, e_1)g(e_1, \nabla_{e_y}e_t).
\end{equation*}
Next, we compute
\begin{eqnarray*}
& &g(\nabla_{[e_t,e_y]}e_y,e_t)\\
& &=g(\nabla_{\xi}e_y, e_t)g(\xi, [e_t,e_y])+ g(\nabla_{e_1}e_y, e_t)g(e_1, [e_t, e_y])\\
& &\quad +\sum_{z} g(\nabla_{e_z}e_y,e_t)g(e_z, [e_t,e_y])+ \sum_{u}g(\nabla_{e_u}e_y,e_t)g(e_u, [e_t,e_y]).
\end{eqnarray*}
Using (\ref{6}) and (\ref{22}), we have 
\begin{equation*}
g(\nabla_{e_u}e_y, e_t)=0,\quad g(\nabla_{e_z}e_y,e_t)=-g(e_y,\nabla_{e_z}e_t)=0.
\end{equation*}
So we obtain
\begin{eqnarray*}
& &g(\nabla_{[e_t,e_y]}e_y,e_t)\\
& &= -(\beta+\gamma)g(\phi e_t, e_y)g(\nabla_{\xi}e_y, e_t)\\
& &\quad +g(\nabla_{e_1}e_y,e_t)g(e_1,\nabla_{e_t}e_y) - g(\nabla_{e_1}e_y,e_t)g(e_1,\nabla_{e_y}e_t).
\end{eqnarray*}
Summarizing the above we have
\begin{eqnarray}\label{23}
& &g(R(e_t,e_y)e_y,e_t)\nonumber\\
& &=-\beta\gamma g(\phi e_t,e_y)^2 + g(\nabla_{e_t}e_y, e_1)g(e_1,\nabla_{e_y}e_t)\\
& &\quad +(\beta+\gamma)g(\phi e_t, e_y)g(\nabla_{\xi}e_y,e_t)\nonumber \\
& &\quad -g(\nabla_{e_1}e_y, e_t)g(e_1,\nabla_{e_t}e_y) + g(\nabla_{e_1}e_y,e_t)g(e_1,\nabla_{e_y}e_t).\nonumber
\end{eqnarray}
By (\ref{8}), (\ref{11}), (\ref{15}) and $\beta\neq \gamma$, we obtain
\begin{eqnarray*}
& &g(\nabla_{e_y}e_t, e_1)-g(\nabla_{e_t}e_y. e_1) =\frac{1}{h_1}\{2c-2\beta\gamma + \alpha(\beta+\gamma)\} g(\phi e_y, e_t),\\
& &g(\nabla_{e_1}e_y, e_t)=\frac{a_1-\gamma}{\beta-\gamma}g(\nabla_{e_y}e_1, e_t) + \frac{\beta h_1}{\beta-\gamma}g(\phi e_y,e_t),\\
& &g(\nabla_{\xi}e_y, e_t)=\frac{1}{\beta-\gamma}(c+\beta\alpha-\beta\gamma)g(\phi e_y, e_t) + \frac{h_1}{\beta-\gamma}g(\nabla_{e_y}e_1,e_t).
\end{eqnarray*}
Moreover, (\ref{7}) and (\ref{8}) imply that
\begin{eqnarray*}
& &g(\nabla_{e_y}e_t, e_1)\\
& & =\frac{-1}{h_1(\gamma-\beta)} \{h_1^2(\beta+\gamma) + (\beta-a_1)(2c-2\beta\gamma + \alpha(\beta+\gamma)\}g(\phi e_y,e_t),\\
& &g(\nabla_{e_t}e_y,e_1)\\
& &=\frac{-1}{h_1(\gamma - \beta)} \{h_1^2(\beta+\gamma) + (\gamma-a_1)(2c-2\beta\gamma+\alpha(\beta+\gamma)\}g(\phi e_y, e_t).
\end{eqnarray*}
Substituting these equations into (\ref{23}), and using  (\ref{5}) and (\ref{21}), we have
\begin{eqnarray*}
& &g(R(e_t,e_y)e_y,e_t)\\
& &=\{(c-2\beta\gamma)-2(\alpha+a_1-\beta)(a_1+\alpha - 2\beta-\gamma)\}g(\phi e_y,e_t)^2.
\end{eqnarray*}
On the other hand, by the equation of Gauss,
\begin{equation*}
g(R(e_t,e_y)e_y, e_t)=c+3cg(\phi e_y,e_t)^2 + \beta\gamma.
\end{equation*}
From these it follows that
\begin{equation*}
c+\beta\gamma = \{-2(c+\beta\gamma)-2(\alpha+a_1-\beta)(\alpha+a_1-2\beta-\gamma)\}g(\phi e_y,e_t)^2.
\end{equation*}
for any $e_y \in T_\beta, e_t \in T_\gamma$. Since $g(\phi e_s, e_t)=0$ for any $s$ and $t$ and $\phi e_1=\mu_2e_2 + \mu_{p+1}e_{p+2}$, we see that $g(\phi e_2, e_{p+2})=0$.  So we have
\begin{equation}\label{24}
(a_1+\alpha-\beta)(a_1+\alpha -2\beta-\gamma)=0.
\end{equation}
 Combining these equations with (\ref{21}), we have
\begin{equation*}
\beta(\beta + 2\gamma - a_1 -\alpha)=0.
\end{equation*}
Since $c=-\beta\gamma$, we have $\beta\neq 0$. From which it follows
\begin{equation}\label{25}
\beta+2\gamma = a_1+\alpha.
\end{equation}
From (\ref{24}), we have $a_1+\alpha=\beta$ or $a_1+\alpha = 2\beta+\gamma$. When $a_1+\alpha=\beta$, by (\ref{25}), we see that $\gamma=0$. This is a contradiction. So we have $a_1+\alpha=2\beta+\gamma$. Then (\ref{25}) implies that $\beta=\gamma$. Again, this is a contradiction.

\end{proof}

\begin{lemma}
Let $M$ be a non-Hopf real hypersurface of $M^n(c)$, $n\geq 3$, $c\neq 0$. Suppose that the Ricci tensor $S$ satisfies $S(X,Y)=ag(X,Y)$ for any $X, Y\in H$, $a$ being a constant.  If $\beta\neq \gamma$, then $\phi e_1\in T_\beta$ or $\phi e_1 \in T_\gamma$.
\end{lemma}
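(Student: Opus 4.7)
My plan is to argue purely algebraically, combining Lemma 3.2 with the identity $\phi^2 = -I + \eta\otimes\xi$, without invoking any new curvature computation. By Case I of Lemma 3.2, $\phi(T_\beta)\perp T_\beta$ and $\phi(T_\gamma)\perp T_\gamma$, and since $\phi$ is skew-symmetric with $\phi(H)\subset H$, it follows that
\[
\phi(T_\beta)\subset \mathrm{span}\{e_1\}\oplus T_\gamma,\qquad \phi(T_\gamma)\subset \mathrm{span}\{e_1\}\oplus T_\beta.
\]
I keep the basis chosen in the proof of Lemma 3.2 so that $\phi e_1=\mu e_2+\nu e_{p+2}$ with $e_2\in T_\beta$ and $e_{p+2}\in T_\gamma$, and aim to show $\mu\nu=0$.

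Skew-symmetry immediately yields $g(\phi e_2,e_1)=-\mu$ and $g(\phi e_{p+2},e_1)=-\nu$, so using the inclusions above I may write
\[
\phi e_2=-\mu e_1+w,\qquad \phi e_{p+2}=-\nu e_1+w',
\]
with $w\in T_\gamma$ and $w'\in T_\beta$. Applying $\phi^2 e_1=-e_1$ then gives
\[
-e_1=\mu\phi e_2+\nu\phi e_{p+2}=-(\mu^2+\nu^2)e_1+\mu w+\nu w',
\]
and comparing components in the orthogonal splitting $\mathrm{span}\{e_1\}\oplus T_\beta\oplus T_\gamma$ produces $\mu^2+\nu^2=1$, $\mu w=0$, and $\nu w'=0$. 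Hence if $\mu\nu\neq 0$, then $w=w'=0$, so $\phi e_2=-\mu e_1$ and $\phi e_{p+2}=-\nu e_1$.

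To close the argument I apply $\phi$ once more. Under the assumption $\mu\nu\neq 0$,
\[
-e_2=\phi^2 e_2=\phi(-\mu e_1)=-\mu\phi e_1=-\mu^2 e_2-\mu\nu e_{p+2},
\]
which forces $\mu^2=1$ and $\mu\nu=0$, in direct contradiction to $\mu\nu\neq 0$. Therefore $\mu=0$ or $\nu=0$, which is exactly the assertion $\phi e_1\in T_\gamma$ or $\phi e_1\in T_\beta$. I do not anticipate a substantive obstacle: once Case I of Lemma 3.2 is in hand, the remaining content is the algebraic rigidity of the almost contact structure on the eigenspace decomposition, and none of the Codazzi-derived identities in Lemma 3.1 need to be invoked for this step.
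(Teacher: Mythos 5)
Your proposal is correct and takes essentially the same route as the paper: its proof of this lemma is likewise a purely algebraic argument from Lemma 3.2 and $\phi^2 e_1=-e_1$, writing $\phi e_1=\mu e_2+\nu e_{p+2}$ and forcing $\mu\nu=0$ via the inclusions $\phi(T_\beta)\subset\mathrm{span}\{e_1\}\oplus T_\gamma$ and $\phi(T_\gamma)\subset\mathrm{span}\{e_1\}\oplus T_\beta$, with no curvature or Codazzi input. Your component-by-component bookkeeping and the second application of $\phi$ are just a slightly more explicit rendering of the paper's span-intersection argument.
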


\begin{proof}
As a result of Lemma 3.2, we have $g(\phi e_x, e_y)=0$  for any $e_x,\ e_y\in T_\beta$ and $g(\phi e_s,e_t)=0$ for any $e_x,\ e_y\in T_{\gamma}.$ We can put $\phi e_1=\mu e_2+ \nu e_{p+2}$ by taking a suitable orthonormal basis of $T_\beta$ and $T_\gamma$. Then we have
\begin{equation*}
\phi^2e_1=-e_1=\mu\phi e_2 + \nu \phi e_{p+2}.
\end{equation*}
Since $\phi e_2 \in {\rm span}\{e_1, e_{p+2}, \cdots, e_{2n-2} \}$, if $\mu\neq 0$, then we see that $e_1+ \nu \phi e_{p+2} \in  {\rm span}\{e_1, e_{p+2}, \cdots, e_{2n-2} \}$. By Lemma 3.2, we have $\nu \phi e_{p+2} \in {\rm span} \{e_1\} $. If $\nu\neq 0$, then we may put $\phi e_{p+2}=e_1$. This contradicts to the assumption that $\mu\neq 0$. Thus we see that if $\mu\neq 0$, then $\nu=0$ and then we can take $\phi e_1=e_2$. On the other hand, if $\mu=0$, then $\phi e_1= e_{p+2}$.
\end{proof}

It is sufficient to consider the case that $\phi e_1\in T_\beta$. In the following, we put $\phi e_1=e_2$.

\begin{lemma}
Let $M$ be a non-Hopf real hypersurface of $M^n(c)$ $(n\geq 3, c\neq 0)$. Suppose that the Ricci tensor $S$ satisfies $S(X,Y)=ag(X,Y)$ for any $X, Y\in H$, $a$ being a constant. If $\beta\neq \gamma$, $\beta\gamma\neq 0$, then $\alpha$, $a_1, h_1, \beta$ and $\gamma$ are constant.
\end{lemma}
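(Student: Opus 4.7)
The plan is to show that every directional derivative of $\beta,\gamma,a_1,h_1,\alpha$ vanishes. The basic rigidity comes from the identity $\beta\gamma = a-(2n+1)c$, which is constant since $a$ and $c$ are, so $X\gamma = -(\gamma/\beta)X\beta$ for every tangent vector $X$ (using $\beta\gamma\neq 0$). Hence it is enough to show $\beta$ is constant; then the trace identity $\mathrm{tr}\,A = \beta+\gamma$, which gives $\alpha+a_1 = (2-n)\beta+(3-n)\gamma$ once one knows $p=n-1$, combined with $h_1^2 = (a_1-\beta)(\gamma-a_1)$, constrains $\alpha$ and $h_1$ in terms of $a_1,\beta,\gamma$.

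For the $\xi$ and $e_1$ directions I would eliminate $g(\nabla_{e_i}e_1,e_i)$ between (\ref{10}) and (\ref{14}), applied in turn to an $e_i\in T_\beta$ and to an $e_j\in T_\gamma$, obtaining
\[
(a_1-\beta)\xi\beta = h_1\,e_1\beta,\qquad (a_1-\gamma)\xi\gamma = h_1\,e_1\gamma.
\]
Substituting $\xi\gamma = -(\gamma/\beta)\xi\beta$ and $e_1\gamma = -(\gamma/\beta)e_1\beta$ into the second and subtracting from the first yields $(\beta-\gamma)\xi\beta = 0$, so $\xi\beta = \xi\gamma = e_1\beta = e_1\gamma = 0$.

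For the transverse directions observe that, by Lemma 3.3 together with the fact that $\phi|_H$ squares to $-I$ and maps $T_\beta\setminus\{e_2\}$ bijectively onto $T_\gamma$, one has $p = n-1$, so $\dim T_\beta = n-1\ge 2$. Applying (\ref{9}) inside $T_\beta$ then gives $e_x\beta = 0$ for every $e_x\in T_\beta$; when $n\ge 4$ the same move inside $T_\gamma$ gives $e_s\beta = 0$ for every $e_s\in T_\gamma$, and thus $\beta,\gamma$ are constant. For the remaining quantities along $T_\gamma$, use $g(\phi e_s,e_1)=0$ to reduce (\ref{12}),(\ref{13}) to $e_sa_1 = (a_1-\gamma)e_sh_1/h_1$ and (\ref{16}),(\ref{17}) to $e_s\alpha = h_1\,e_sh_1/(a_1-\gamma)$. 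Combining with $e_s(\alpha+a_1)=0$ (from the trace identity and $e_s\beta=e_s\gamma=0$) produces $\bigl[h_1^2+(a_1-\gamma)^2\bigr]\,e_sh_1 = 0$, forcing $e_sh_1 = e_sa_1 = e_s\alpha = 0$; a symmetric calculation along $T_\beta$ completes the argument when $n\ge 4$.

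The main obstacle is $n=3$, where $\dim T_\gamma = 1$, the Codazzi application inside $T_\gamma$ is vacuous, and $e_s\beta$ for the unique $e_s\in T_\gamma$ is not controlled by the step above. Repeating the elimination without assuming $e_s\beta=0$, and differentiating $h_1^2=(a_1-\beta)(\gamma-a_1)$ along $e_s$, leads instead to $(a_1-\gamma)\bigl[a_1(2\beta-\gamma)-\beta\gamma\bigr]\,e_sh_1 = 0$, so $e_sh_1 = 0$ unless the algebraic identity $a_1(2\beta-\gamma)=\beta\gamma$ holds on an open set. To rule that last possibility out I would compute a further sectional curvature, e.g. $g(R(\xi,e_s)e_s,\xi)$, via the Gauss equation, compare it to the Codazzi-derived connection coefficients (as in the proof of Lemma 3.2), and extract a contradiction with the standing hypotheses $\beta\neq\gamma$ and $h_1\neq 0$.
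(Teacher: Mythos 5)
Your opening steps are sound: the elimination of $g(\nabla_{e_i}e_1,e_i)$ between (\ref{10}) and (\ref{14}) for an $e_i\in T_\beta$ and an $e_j\in T_\gamma$, combined with $X\gamma=-(\gamma/\beta)X\beta$ from the constancy of $\beta\gamma$, does give $\xi\beta=e_1\beta=0$ (a neat variant of the paper's route, which instead uses $\phi e_x\in T_\gamma$ together with (\ref{6}) and (\ref{14})), and your count $p=n-1$, $\dim T_\beta=n-1\geq 2$, and the use of (\ref{9}) inside $T_\beta$ and $T_\gamma$ are correct for $n\geq 4$. The first genuine gap is in the constancy of $a_1,\alpha,h_1$. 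Your reduction of (\ref{12}), (\ref{13}), (\ref{16}), (\ref{17}) relies on $g(\phi e_i,e_1)=0$, which holds for $e_i\in T_\gamma$ and for $e_i\in T_\beta$ orthogonal to $e_2=\phi e_1$, but fails for $e_2$ itself, where $g(\phi e_2,e_1)=-1$; and you never address the derivatives of $a_1,\alpha,h_1$ in the $\xi$, $e_1$ and $e_2$ directions at all, so "completes the argument" is not justified even for $n\geq 4$. The relations (\ref{28}), (\ref{19}) and the derivative of $h_1^2=(a_1-\beta)(\gamma-a_1)$ do not obviously close this system either. The paper avoids this entirely: having shown $\beta,\gamma$ and $a_1+\alpha$ constant, it computes the sectional curvature $g(R(e_t,e_1)e_1,e_t)$ both from the Gauss equation and from the Codazzi-derived connection coefficients ((\ref{13}), (\ref{17}), (\ref{20}), (\ref{11}), (\ref{8})), obtaining the algebraic identity (\ref{27}), which—since $\gamma\neq 0$—pins down $a_1-\alpha$ as a constant pointwise, with no need to control each directional derivative. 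Your proposal contains no substitute for this step.

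The second gap is the case $\dim T_\gamma=1$ (i.e.\ $n=3$), which you correctly identify as the obstruction to getting $e_s\gamma=0$ from (\ref{9}) inside $T_\gamma$, but then only sketch: the identity $(a_1-\gamma)\bigl[a_1(2\beta-\gamma)-\beta\gamma\bigr]e_sh_1=0$ is asserted without derivation, and the exclusion of the degenerate locus $a_1(2\beta-\gamma)=\beta\gamma$ is deferred to an unexecuted curvature computation ("I would compute\dots"). As it stands this part is a plan, not a proof, so the lemma is not established for $n=3$ (nor, strictly, in any situation where the $T_\gamma$-direction derivatives of $\beta,\gamma$ are not yet controlled before you invoke $e_s(\alpha+a_1)=0$).
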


\begin{proof}
First we prove that $\beta$ and $\gamma$ are constant. Using (\ref{9}), for any $e_x, e_y\in T_\beta$ and $e_s, e_t\in T_\gamma$,
\begin{eqnarray*}
& &e_y\beta = (\beta-\beta)g(\nabla_{e_x}e_y,e_x)=0,\\
& &e_s\gamma = (\gamma-\gamma)g(\nabla_{e_t}e_s,e_t)=0.
\end{eqnarray*}
Since $\beta\gamma=(2n+1)c-a$ is constant, we also have $e_x\gamma=0$ for any $e_x\in T_\beta$ and $e_s\beta=0$ for any $e_s\in T_\gamma$.

Next, by (\ref{10}), we obtain
\begin{equation*}
e_1\beta = (a_1-\beta)g(\nabla_{e_x}e_1, e_x)= (a_1-\beta)g(\nabla_{e_x}e_2, \phi e_x).
\end{equation*}
Since $M$ is non-Hopf, $h_1^2\neq 0$ locally, so $a_1\neq \beta$ and $a_1\neq \gamma$ on the neighborhood. Note that ${\rm{dim}}T_\beta\geq 2$ from Lemma 3.2 and Lemma 3.3. When $x\geq 3$, using $\phi e_x\in T_\gamma$, we have $g(\nabla_{e_x}e_2,\phi e_x)=0$ by (\ref{6}). So we obtain $e_1\beta=0$ and $g(\nabla_{e_x}e_1,e_x)=0$.

Moreover, (\ref{14}) induces
\begin{equation*}
\xi{\beta}=h_1g(\nabla_{e_x}e_1, e_x)=0.
\end{equation*}
From these equations, we see that $\beta$ and $\gamma$ are constant. 

Next we show that $a_1$ and $\alpha$ are constant. Since ${\rm{tr}}A=\beta+\gamma$ is constant, taking a trace of the shape operator $A$ yields
\begin{equation*}
a_1+\alpha= {\rm{tr}}A - p\beta - (2n-3-p)\gamma,
\end{equation*}
so $a_1 + \alpha$ is constant. We compute a sectional curvature for a plane spanned by $e_t\in T_\gamma$ and $e_1$. Using the equation of Gauss, we have
\begin{equation*}
g(R(e_t,e_1)e_1,e_t)=c+a_1\gamma.
\end{equation*}
On the other hand, we compute the right hand side of 
\begin{eqnarray*}
& &g(R(e_t,e_1)e_1,e_t)\\
& & =g(\nabla_{e_t}\nabla_{e_1}e_1,e_t) - g(\nabla_{e_1}\nabla_{e_t}e_1, e_t) - g(\nabla_{[e_t,e_1]}e_1,e_t).
\end{eqnarray*}
Since $a_1+\alpha$ is constant, (\ref{13}) and (\ref{17}) imply that
\begin{equation*}
(a_1-\gamma)g(\nabla_{e_1}e_1, e_t) + h_1 g(\nabla_{\xi}e_1,e_t)=0.
\end{equation*}
By (\ref{20}), we also have
\begin{equation*}
h_1 g(\nabla_{e_1}e_1, e_t) - (a_1-\gamma)g(\nabla_{\xi}e_1, e_t)=0.
\end{equation*}
These equations imply
\begin{equation*}
\{(a_1-\gamma)^2+h_1^2\}g(\nabla_{e_1}e_1, e_t)=0.
\end{equation*}
Since $h_1\neq 0$, we obtain $g(\nabla_{e_1}e_1,e_t)=0$ for any $e_t\in T_\gamma$. 

By (\ref{9}), we have $g(\nabla_{e_t}e_t, e_x)=0$ for any $e_x\in T_\beta$ and $e_t\in T_\gamma$. Thus we obtain
\begin{equation*}
g(\nabla_{e_t}\nabla_{e_1}e_1,e_t)=-g(\nabla_{e_1}e_1, \nabla_{e_t}e_t)=0.\
\end{equation*}
Next we compute the term $g(\nabla_{e_1}\nabla_{e_t}e_1, e_t)$. By (\ref{10}), we have $g(\nabla_{e_t}e_1, e_t)=0$, it follows that

\begin{eqnarray*}
& &g(\nabla_{e_1}\nabla_{e_t}e_1, e_t)\\
& &=-g(\nabla_{e_t}e_1, \nabla_{e_1}e_t)\\
& &= -g(\nabla_{e_t}e_1, \xi)g(\xi, \nabla_{e_1}e_t)-g(\nabla_{e_t}e_1, e_1)g(e_1,\nabla_{e_1}e_t)\\
& &\quad - \sum_{x}g(\nabla_{e_t}e_1, e_x)g(e_x, \nabla_{e_1}e_t) - \sum_{s} g(\nabla_{e_t}e_1, e_s)g(e_s, \nabla_{e_1}e_t).
\end{eqnarray*}
Taking a suitable orthonormal basis of $T_\beta$ and $T_\gamma$, when $x\neq 2$ and $g(\phi e_x, e_t)=0$, by (\ref{6}),
\begin{equation*}
g(\nabla_{e_t}e_1, e_x)=g(\nabla_{e_t}e_2, \phi e_x)=0,
\end{equation*}
since $e_2\in T_\beta$ and $\phi e_x\in T_\gamma$. We can take a suitable orthonormal basis such that $\phi e_x= e_t$. Then, by (\ref{9}), 
\begin{equation*}
g(\nabla_{e_t}e_2, e_t)=0=g(\nabla_{e_t}e_1, e_x).
\end{equation*}
When $x=2$, using (\ref{7}) and (\ref{8}), we see that
\begin{eqnarray*}
& &(\beta- a_1)g(\nabla_{e_t}e_2, e_1) - (\gamma- a_1) g(\nabla_{e_2}e_t, e_1)=0,\\
& &-h_1 g(\nabla_{e_t}e_2, e_1) + h_1 g(\nabla_{e_2}e_t, e_1)=0.
\end{eqnarray*}
Thus we have $g(\nabla_{e_t}e_2, e_1)=0$. Hence we obtain
\begin{equation}\label{26}
g(\nabla_{e_t}e_1, e_x)=0
\end{equation}
for any $e_x\in T_\beta$.
Moreover, by (\ref{11}), 
\begin{equation*}
(a_1-\gamma)g(\nabla_{e_t}e_1, e_s) + \gamma h_1 g(\phi e_t,e_s)=0.
\end{equation*}
Since $a_1\neq \gamma$, we see that 
$$g(\nabla_{e_t}e_1,e_s)=0.$$ 
From these equations, we obtain $g(\nabla_{e_1}\nabla_{e_t}e_1, e_t)=0$.

Next we compute 
\begin{eqnarray*}
& &g(\nabla_{[e_t,e_1]} e_1, e_t)\\
& &= g(\nabla_{\xi}e_1, e_t)(\xi, [e_t,e_1]) + g(\nabla_{e_1}e_1, e_t) g(e_1, [e_t,e_1])\\
& &\quad + \sum_{x}g(\nabla_{x} e_1, e_t) g(e_x, [e_t, e_1] ) + \sum_s g(\nabla_{e_s}e_1, e_t) g(e_s, [e_t,e_1]).
\end{eqnarray*}
Since $g(\phi e_1,e_t)=0$, we have $g(\xi, [e_t, e_1])=0$. By (\ref{8}) and (\ref{26}), it follows that
\begin{equation*}
h_1 g(\nabla_{e_x}e_1, e_t) = \{2c - 2\beta\gamma + \alpha(\beta + \gamma)\}g(\phi e_t, e_x).
\end{equation*}
By (\ref{26}), we have
\begin{equation*}
g(e_x, [e_t, e_1]) = g(e_x, \nabla_{e_t}e_1) - g(e_x, \nabla_{e_1}e_t) = -g(e_x, \nabla_{e_1}e_t).
\end{equation*}
Hence we obtain
\begin{eqnarray*}
g(\nabla_{[e_t,e_1]} e_1,e_t) &=& -\sum_{x} g(\nabla_{e_x}e_1, e_t) g(e_x, \nabla_{e_1}e_t)\\
 &=& -\frac{1}{h_1} \{2c-2\beta\gamma + \alpha(\beta+\gamma)\} g(\phi e_t,\nabla_{e_1}e_t).
\end{eqnarray*}
On the other hand,  (\ref{11}) and (\ref{26}) imply that
\begin{equation*}
g(\nabla_{e_1}e_t, \phi e_t)=\frac{-\gamma h_1}{\beta -\gamma}.
\end{equation*}
From these equations, we have
\begin{equation*}
g(\nabla_{[e_t,e_1]}e_1,e_t) = \frac{\gamma}{\beta-\gamma}\{2c - 2\beta\gamma +\alpha(\beta+\gamma)\}.
\end{equation*}
Summarizing the above we obtain
\begin{eqnarray*}
g(R(e_t,e_1)e_1,e_t)&=&-\frac{\gamma}{\beta-\gamma}\{2c-2\beta\gamma + \alpha(\beta+\gamma)\}\\
 &=&c+a_1\gamma,
\end{eqnarray*}
from which we see that
\begin{equation}\label{27}
(a_1 - \alpha)\gamma^2 = (\alpha + a_1)\beta\gamma +c(\beta + \gamma) - 2\beta\gamma^2.
\end{equation}
By the assumption, $\beta\gamma\neq 0$, and hence $\gamma\neq 0$.  Since $\beta, \gamma$ and $a_1+\alpha$ are constant,  (\ref{27}) implies that $a_1 - \alpha$ is a constant. So we see that  $a_1$ and $\alpha$ are constant. By (\ref{5}), $h_1$ is also constant.
\end{proof}

Next, we show that the case $\beta\gamma=0$ does not occur.

\begin{lemma}
Let $M$ be a non-Hopf real hypersurface of $M^n(c)$, $n\geq 3$, $c\neq 0$. Suppose that the Ricci tensor $S$ satisfies $S(X,Y)=ag(X,Y)$ for any $X, Y\in H$, $a$ being a constant. If  $\beta \neq \gamma$, then $\beta\gamma\neq 0$.
\end{lemma}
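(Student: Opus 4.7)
I proceed by contradiction: suppose $\beta\gamma = 0$. Since $\beta\gamma = a - (2n+1)c$ is constant by (\ref{3}), and $\beta \neq \gamma$ on our neighborhood, one of the two must vanish identically. Without loss of generality take $\gamma \equiv 0$ and $\beta \not\equiv 0$. The goal is to reach equation (\ref{27}) from the proof of Lemma 3.4: once there, substituting $\gamma = 0$ yields $c\beta = 0$, contradicting $c \neq 0$ and $\beta \neq 0$.

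The sectional-curvature computation that produces (\ref{27}) in Lemma 3.4 requires $\beta$ to be constant. The arguments in that proof that $e_y\beta = 0$ for $e_y \in T_\beta$, $e_1\beta = 0$, and $\xi\beta = 0$ transfer verbatim via (\ref{9}), (\ref{10}), (\ref{14}), and (\ref{6}) (the last with $k = \phi e_x \in T_\gamma$ for $x \geq 3$). The only step that fails is the deduction $(e_s\beta)\gamma = 0$ from constancy of $\beta\gamma$, which is vacuous when $\gamma = 0$. Thus the key new work is to prove $e_s\beta = 0$ for $e_s \in T_\gamma$ by an independent argument; this will be the main obstacle.

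For this I exploit two structural identities available when $\gamma = 0$: from $\mathrm{tr}A = \beta + \gamma = \beta$ together with the direct trace expansion $\mathrm{tr}A = \alpha + a_1 + p\beta$, one obtains $\alpha + a_1 = (1-p)\beta$, where $p \geq 2$ by Lemmas 3.2 and 3.3; from (\ref{5}) one gets $h_1^2 = a_1(\beta - a_1)$, and $h_1 \neq 0$ then forces $a_1 \notin \{0,\beta\}$. Writing $u_s = g(\nabla_{e_1}e_1, e_s)$ and $v_s = g(\nabla_\xi e_1, e_s)$, specializing (\ref{12}), (\ref{13}), (\ref{17}), (\ref{20}) at $a_i = 0$ gives
\[ e_s a_1 = a_1 u_s,\quad e_s h_1 = h_1 u_s,\quad e_s\alpha = h_1 v_s,\quad a_1 v_s = h_1 u_s, \]
whence $e_s(\alpha + a_1) = (a_1^2 + h_1^2)u_s/a_1 = \beta u_s$. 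Combined with $\alpha + a_1 = (1-p)\beta$, this yields $e_s\beta = \beta u_s/(1-p)$. Independently, differentiating the identity $h_1^2 = a_1(\beta - a_1)$ along $e_s$ and substituting the first three relations above gives a second expression $e_s\beta = \beta u_s$. Equating the two forces $p\beta u_s = 0$, so $u_s = 0$ and hence $e_s\beta = 0$.

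Once $\beta$ is shown to be constant, $\alpha + a_1 = (1-p)\beta$ is also constant, and the sectional-curvature computation of Lemma 3.4 for $g(R(e_t, e_1)e_1, e_t)$ with $e_t \in T_\gamma$ carries through unchanged to produce (\ref{27}). Substituting $\gamma = 0$ there yields $c\beta = 0$, the desired contradiction. All the work beyond reusing Lemma 3.4 is concentrated in the double-expression argument establishing $e_s\beta = 0$.
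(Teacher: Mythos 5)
Your reduction ``without loss of generality take $\gamma\equiv 0$'' is where the argument breaks. After Lemma 3.3 the labelling of the two curvatures is no longer symmetric: $\beta$ has been fixed as the principal curvature of the eigenspace containing $\phi e_1=e_2$. Your own computation uses this convention in an essential way: dropping the $g(\phi e_i,e_1)$ and $g(\phi e_1,e_i)$ terms when you specialize (\ref{12}), (\ref{13}), (\ref{17}), (\ref{20}) to $e_i=e_s\in T_\gamma$ is exactly the statement $\phi e_1\perp T_\gamma$, and your inequality $p\geq 2$ is $\dim T_\beta\geq 2$, which also comes from $e_2=\phi e_1\in T_\beta$. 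So the relabelling freedom you invoke has already been spent, and the two subcases --- (i) $\gamma=0$, $\beta\neq 0$ and (ii) $\beta=0$, $\gamma\neq 0$, both with $\phi e_1\in T_\beta$ --- are genuinely distinct. Your proposal treats only (i); case (ii) is untouched, and it cannot be obtained by swapping names, because in that case the relevant first-order identities in the $T_\beta$-directions retain the $\phi$-terms (for $e_x=e_2$ one has $g(\phi e_2,e_1)=-1\neq 0$), so the clean relations $e_s a_1=a_1u_s$, etc., have no analogue. This is precisely the hard part of the paper's proof: the case $\beta=0$, $\gamma\neq 0$ occupies most of it and even requires separate treatment of $n>3$ and $n=3$ (for $n=3$ one has $T_\beta={\rm span}\{e_2,e_3\}$, $T_\gamma={\rm span}\{\phi e_3\}$ and no room to choose $e_x\neq e_2$ with $g(\phi e_x,e_s)=0$), ending in the contradiction $ch_1^2=0$.

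For the case you do handle, your route differs from the paper's and appears sound: the paper disposes of $\gamma=0$, $\beta\neq 0$ directly by computing $g(R(e_2,e_s)e_s,e_2)=0$ against the Gauss value $c$, with no need to know $\beta$ is constant; you instead prove $e_s\beta=0$ via the two expressions for $e_s\beta$ coming from $e_s(\alpha+a_1)=(1-p)e_s\beta$ and from differentiating $h_1^2=a_1(\beta-a_1)$, conclude $\beta$ is constant, and then reuse (\ref{27}) with $\gamma=0$ to get $c\beta=0$. The individual steps check out (the specializations of (\ref{12}), (\ref{13}), (\ref{17}), (\ref{20}) are correct under $\phi e_1\in T_\beta$, and the derivation of (\ref{27}) in Lemma 3.4 only needs $\beta$, $\gamma$, $a_1+\alpha$ constant, not $\beta\gamma\neq 0$). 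But as a proof of the lemma the proposal is incomplete: the missing case $\beta=0$ is the substantive content, and no idea is offered for it.
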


\begin{proof}
 We have $g(\phi e_x, e_y)=0$  for any $e_x,\ e_y\in T_\beta$ and $g(\phi e_s,e_t)=0$ for any $e_s,\ e_t\in T_{\gamma}$ by Lemma 3.2.

First, we suppose $\beta\neq 0$ and $\gamma=0$.

For any $e_s\in T_{\gamma}$, we consider
\begin{eqnarray*}
& &g(R(e_2,e_s)e_s,e_2)\\
& &=g(\nabla_{e_2}\nabla_{e_s}e_s,e_2)-g(\nabla_{e_s}\nabla_{e_2}e_s, e_2) - g(\nabla_{[e_2,e_s]}e_s, e_2).
\end{eqnarray*}
It follows from (\ref{9}) that $g(\nabla_{e_s}e_2,e_s)=0$. Thus we obtain
\begin{equation}\label{28}
g(\nabla_{e_2}\nabla_{e_s}e_s,e_2)=-g(\nabla_{e_s}e_s, \nabla_{e_2}e_2).
\end{equation}
We see that $g(\nabla_{e_s}e_s,\xi)=-g(e_s,\phi Ae_s)=0$. Moreover, (\ref{9}) and (\ref{10}) imply that $g(\nabla_{e_s}e_s,e_x)=0$ and $g(\nabla_{e_s}e_s,e_1)=0$. Therefore we get
\begin{eqnarray*}
g(\nabla_{e_s}e_s,\nabla_{e_2}e_2)=-\sum_t g(\nabla_{e_s}e_s,e_t)g(e_t,\nabla_{e_2}e_2), \hspace{0.5cm}  e_t\in T_{\gamma}.
\end{eqnarray*}
Since we have $g(\nabla_{e_2}e_2,e_t) = -g(\nabla_{e_2}e_1,\phi e_t)$, (\ref{11}) implies that 
\begin{equation}\label{29}
g(\nabla_{e_2}e_2,e_t)=0. 
\end{equation}
Consequently, by (\ref{28}), $g(\nabla_{e_2}\nabla_{e_s}e_s,e_2)=0$.

Next we compute
\begin{eqnarray*}
& &g(\nabla_{e_s}\nabla_{e_2}e_s,e_2)\\
& &=-g(\nabla_{e_2}e_s, \nabla_{e_s}e_2)\\
& &=-g(\nabla_{e_2}e_s,e_1)g(e_1,\nabla_{e_s}e_2).
\end{eqnarray*}
Using (\ref{7}) and (\ref{8}), we have
\begin{eqnarray*}
& &-a_1g(\nabla_{e_2}e_s, e_1) - (\beta-a_1)g(\nabla_{e_s}e_2, e_1)=0,\\
& &-h_1 g(\nabla_{e_2}e_s,e_1)+h_1g(\nabla_{e_s}e_2,e_1)=0.
\end{eqnarray*}
Thus we see that $g(\nabla_{e_2}e_s,e_1)=g(\nabla_{e_s}e_2,e_1)=0$. It follows that $g(\nabla_{e_s}\nabla_{e_2}e_s,e_2)=0$.

Since $\nabla_{e_2}e_t\in T_\gamma$ and $\nabla_{e_t}e_2\in T_\beta$ for any $e_t \in T_{\gamma}$, similar calculations can be performed to compute
\begin{equation*}
g(\nabla_{[e_2,e_s]} e_s,e_2)=0.
\end{equation*}
So we have $g(R(e_2,e_s)e_s,e_2)=0$ for $e_s \in T_{\gamma}$. On the other hand, the equation of Gauss implies
\begin{equation*}
g(R(e_2,e_s)e_s,e_2)=c.
\end{equation*}
This is a contradiction.

Next we assume $\beta=0$ and $\gamma\neq 0$. 

Now we suppose that $n>3$. Since $g(\phi e_x, e_y)=0$ for any $e_x, e_y \in T_{\beta}$ and $g(\phi e_s, e_t)=0$ for any $e_s, e_t \in T_{\gamma}$, we can take $e_x (\neq e_2)$ and $e_s$ such that $g(\phi e_x,e_s) =0$.

We compute $g(R(e_x,e_s)e_s,e_x)$. By (\ref{9}) and (\ref{10}), we have
\begin{eqnarray*}
& &g(\nabla_{e_x}e_x, e_1)=0,\\
& &g(\nabla_{e_s}e_s, e_1)=\frac{e_1\gamma}{\gamma-a_1},\\
& &g(\nabla_{e_x}e_x, e_s)=0,\\
& &g(\nabla_{e_s}e_s, e_x)=\frac{e_x\gamma}{\gamma}
\end{eqnarray*}
for any $e_x\in T_\beta$ and $e_s\in T_{\gamma}$. Since $g(\nabla_{e_x}e_x,\xi)=0$, we see that $\nabla_{e_x}e_x\in T_\beta$. Moreover, since $g(\nabla_{e_s}e_s, \xi)=0$, we can represent
\begin{equation}
\nabla_{e_s}e_s= \mu_1e_1 + \sum_{y}\mu_y e_y + \sum_t \mu_t e_t,
\end{equation}\label{30}
where we put $\mu_1=\frac{e_1\gamma}{\gamma-a_1}$,  $\mu_y=\frac{e_y\gamma}{\gamma}$ and $\mu_t = g(\nabla_{e_s}e_s,e_t)$. So we obtain

\begin{eqnarray*}
g(\nabla_{e_x}\nabla_{e_s}e_s,e_x)= (e_x\mu_x) + \sum_y \mu_y g(\nabla_{e_x}e_y, e_x).
\end{eqnarray*}

Next we compute 
\begin{eqnarray*}
g(\nabla_{e_s}\nabla_{e_x}e_s,e_x)=-g(\nabla_{e_x}e_s. \nabla_{e_s}e_x).
\end{eqnarray*}
By (\ref{6}), we see that $g(\nabla_{e_x}e_s,e_y)=0$ for any $x\neq y$ and $s$. We also have $g(\nabla_{e_s}e_x,e_t)=0$ for any $s\neq t$ and $x$. Since $\beta=0$, it follows that $g(\nabla_{e_x}e_s,\xi)=0$ for any $x$ and $s$. So we obtain
\begin{eqnarray*}
g(\nabla_{e_s}\nabla_{e_x}e_s,e_x)=-g(\nabla_{e_x}e_s,e_1)g(e_1,\nabla_{e_s}e_x).
\end{eqnarray*}
Here we can take $e_x\in T_\beta$ and $e_s\in T_\gamma$ such that $g(\phi e_x, e_s)=0$. Using (\ref{7}) and (\ref{8}), we have
\begin{eqnarray*}
& &(\gamma-a_1)g(\nabla_{e_x}e_s, e_1) + a_1g(\nabla_{e_s}e_x, e_1) = 0,\\
& &-h_1 g(\nabla_{e_x}e_s, e_1) + h_1 g(\nabla_{e_s}e_x, e_1)=0.
\end{eqnarray*}
From these, we see that $g(\nabla_{e_x}e_s, e_1)=g(\nabla_{e_s}e_x, e_1)=0$, and hence
$$g(\nabla_{e_s}\nabla_{e_x}e_s, e_x)=0,$$
where $g(\phi e_x, e_s)=0$.

Finally, we compute $g(\nabla_{[e_x,e_s]} e_s, e_x)$. Since we take $e_x, e_s$ such that $g(\phi e_x, e_s)=0$, it follows that $g(\nabla_{e_s}e_x, \xi)=0$ and $g(\nabla_{e_x}e_s,\xi)=0$. So we see that $\nabla_{e_x}e_s\in T_{\gamma}$ and $\nabla_{e_s}e_x \in T_\beta \oplus {\rm span}\{e_s\}$. 
\begin{eqnarray*}
g(\nabla_{[e_x, e_s]} e_s, e_x)=g(\nabla_{e_s}e_x,e_s)^2 =\mu_x^2.
\end{eqnarray*}
Since $g(\phi e_x, e_s)=0$, using $\phi e_s\in T_\beta$ and $\phi e_x\in T_\gamma$, 
\begin{equation*}
\mu_x=g(\nabla_{e_s}e_s, e_x)=g(\nabla_{e_s}\phi e_s, \phi e_x)=0.
\end{equation*}
Thus we have
$$g(\nabla_{[e_x, e_s]} e_s, e_x)=0.$$
These equations imply that
$$g(R(e_x,e_s)e_s,e_x)=\sum_y \mu_y g(\nabla_{e_x}e_y,e_x)$$
when $g(\phi e_x, e_s)=0$. Moreover, by (\ref{9}), for any $e_y\neq e_2$ and $e_t\in T_\gamma$, we obtain
\begin{eqnarray*}
0= -\gamma g(\nabla_{e_t}\phi e_y, \phi e_t) - (e_y\gamma).
\end{eqnarray*}
For each $e_y$,  we can take $e_t\in T_\gamma$ such that $g(\phi e_y, e_t)=0$. Then, by (\ref{6}),  we have $\mu_y=\frac{e_y\gamma}{\gamma}=0$ for any $y\neq 2$.
Therefore we obtain
\begin{eqnarray*}
g(R(e_x,e_s)e_s,e_x)=\mu_2 g(\nabla_{e_x}e_2, e_x).
\end{eqnarray*}

Next we compute $g(\nabla_{e_x}e_x,e_2)$. Since $\phi e_x\in T_\gamma$, using (\ref{7}) and (\ref{8}), we have
\begin{eqnarray*}
& &(\gamma-a_1)g(\nabla_{e_x}\phi e_x, e_1) + a_1g(\nabla_{\phi e_x}e_x, e_1) + h_1\gamma g(\phi e_x, \phi e_x)=0,\\
& &-h_1 g(\nabla_{e_x}\phi e_x, e_1)+ h_1 g(\nabla_{\phi e_x}e_x, e_1) + (2c+\alpha\gamma)g(\phi e_x, \phi e_x)=0.
\end{eqnarray*}
So we obtain
\begin{equation*}
g(\nabla_{e_x}e_x, e_2)=-g(\nabla_{e_x}\phi e_x ,e_1)=\frac{h_1^2\gamma - 2ca_1 - a_1\alpha\gamma}{h_1\gamma}.
\end{equation*} 
Similarly, we compute $g(\nabla_{e_s}e_s, e_2)=-g(\nabla_{e_s}\phi e_s,e_1)$ using (\ref{7}) and (\ref{8}). Then we have
\begin{equation*}
g(\nabla_{e_s}e_s,e_2)=-\frac{h_1^2\gamma + (2c+\alpha\gamma)(\gamma-a_1)}{h_1\gamma}.
\end{equation*}
These equations and the equation of Gauss imply that
\begin{eqnarray*}
& &g(R(e_x,e_s)e_s,e_x)\\
& &=\frac{(h_1^2\gamma - 2ca_1 -a_1\alpha\gamma)(h_1^2\gamma + (2c+\alpha\gamma)(\gamma-a_1))}{(h_1\gamma)^2}=-c.
\end{eqnarray*}
By the straightforward computation using ${\rm{tr}}A=\gamma=a_1+\alpha+q\gamma$, $q=(2n-3-p)$, we have
\begin{equation*}
-c=\frac{1}{\gamma^2}(q\gamma^2-2c)(2c-(q-1)\gamma^2),
\end{equation*}
from which
\begin{equation*}
q(q-1){\gamma^4} - (4q-1)c\gamma^2 + 4c^2=0.
\end{equation*}

So we see that $\gamma$ is constant, and hence, $\mu_2 = \frac{e_2\gamma}{\gamma}=0$. Therefore,  $g(R(e_x,e_s)e_s,e_x)=\mu_2 g(\nabla_{e_x}e_2,e_x)=0$. On the other hand, by the equation of Gauss, we have $g(R(e_x,e_s)e_s,e_x)=c$. This is a contradiction.

Finally, we consider the case that $n=3$.

We can take an orthonormal basis $\{\xi, e_1, e_2=\phi e_1, e_3, e_4=\phi e_3\}$, where $e_2, e_3 \in T_{\beta}$ and $e_4\in T_{\gamma}$, $\beta = 0$. Then we have
\begin{equation*}
\alpha + a_1 = 0, \hspace{1cm} h_1^2 = a_1(\gamma - a_1).
\end{equation*}
From these equations, we obtain
\begin{equation*}
2h_1 (e_2 h_1) =(e_2 \alpha)a_1 + \alpha(e_2 a_1) + (e_2 a_1)\gamma + a_1 (e_2\gamma) .
\end{equation*}
On the other hand, (\ref{12}), (\ref{13}), (\ref{16}) and (\ref{17}) imply that
\begin{eqnarray*}
& &e_2 h_1 = h_1 g(\nabla_{e_1}e_1, e_2) + (2c+\alpha a_1),\\
& &e_2 a_1 = a_1 g(\nabla_{e_1}e_1, e_2) + h_1 a_1,\\
& &e_2 h_1 = a_1g(\nabla_{\xi}e_1, e_2) + (c+ h_1^2),\\
& &e_2 \alpha = h_1 g(\nabla_{\xi}e_1, e_2) + h_1\alpha.
\end{eqnarray*}
Substituting these equations into the equation above, and using $h_1^2 - \alpha a_1 - a_1\gamma =0$ and $\alpha + a_1 = 0$, we obtain
\begin{equation*}
3ch_1 = a_1(e_2 \gamma).
\end{equation*}
By (\ref{9}), we have
\begin{equation*}
(e_2\gamma) = -\gamma g(\nabla_{e_4}e_2, e_4) = \gamma g(\nabla_{e_4}e_3, e_1).
\end{equation*}
Furthermore by (\ref{7}) and (\ref{8}), it follows that
\begin{eqnarray*}
& &(\gamma - a_1)g(\nabla_{e_3}e_4,e_1) + a_1 g(\nabla_{e_4}e_3,e_1) + \gamma h_1 =0,\\
& &-h_1 g(\nabla_{e_3}e_4,e_1) + h_1 g(\nabla_{e_4}e_3, e_1) + (2c+\alpha\gamma)=0,
\end{eqnarray*}
and hence
\begin{eqnarray*}
& &g(\nabla_{e_4}e_3, e_1) = \frac{2c(a_1-\gamma)}{\gamma h_1}.
\end{eqnarray*}
Thus we have
\begin{equation*}
(e_2\gamma) = \frac{2c(a_1 - \gamma)}{h_1}.
\end{equation*}
Therefore we have $3ch_1^2 = 2ca_1(a_1-\gamma)=-2ch_1^2$, which implies that $ch_1^2=0$. This is a contradiction.

From these considerations, we see that $\beta\gamma \neq 0$ for $n \geq 3$.
\end{proof}

From Lemma 3.4 and Lemma 3.5, we conclude that if a non-Hopf real hypersurface $M$ satisfies $S(X,Y)=ag(X,Y), X,Y\in H$ and if $\beta \neq \gamma$, then $\alpha, h_1, a_1, \beta $ and $\gamma$ are constant. Moreover, the principal curvatures of $M$ are constant.\\

\section[4]{Proof of Theorem}
 
 To prove our theorem we show that there does not exist a non-Hopf real hypersurface $M$ with the condition $S(X,Y) = ag(X,Y), X,Y \in H$, $a$ being a constant.

First we prove
\begin{lemma}
Let $M$ be a non-Hopf real hypersurface of $M^n(c)$, $n\geq 3$, $c\neq 0$. If the Ricci tensor $S$ satisfies $S(X,Y)=ag(X,Y)$ for any $X, Y\in H$, $a$ being a constant, then $\beta=\gamma$.
\end{lemma}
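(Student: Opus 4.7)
The plan is to assume $\beta\ne\gamma$ and derive a contradiction, leveraging all the rigidity assembled in Lemmas 3.2--3.5. By Lemma 3.5 we know $\beta\gamma\ne 0$, by Lemma 3.4 all of $\alpha,a_1,h_1,\beta,\gamma$ are constants (so every $(e_i a_j)$, $(e_i\alpha)$, $(\xi h_1)$ term in (\ref{6})--(\ref{20}) drops out), and by Lemma 3.3 we may take $\phi e_1=e_2\in T_\beta$. A first useful observation is a dimension count: since $\phi|_H$ is a linear isomorphism, $\phi e_2=-e_1$, and Lemma 3.2 forces $\phi(T_\beta\ominus\mathrm{span}\{e_2\})\subset T_\gamma$ and $\phi(T_\gamma)\subset T_\beta\ominus\mathrm{span}\{e_2\}$, we must have $p-1=2n-3-p$, i.e.\ $\dim T_\beta=n-1$ and $\dim T_\gamma=n-2\ge 1$. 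In particular, since $n\ge 3$, there is at least one $e_y\in T_\beta$ with $y\ge 3$, and for such $e_y$ one has $\phi e_y\in T_\gamma$.

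The key identity already available is (\ref{27}) from Lemma 3.4, namely
\[
(a_1-\alpha)\gamma^2=(a_1+\alpha)\beta\gamma+c(\beta+\gamma)-2\beta\gamma^2,
\]
obtained by computing $g(R(e_t,e_1)e_1,e_t)$ two ways for $e_t\in T_\gamma$. My plan is to run the identical calculation with $e_t$ replaced by the vector $e_y\in T_\beta$, $y\ge 3$, constructed above. The Codazzi relations (\ref{6})--(\ref{20}) are symmetric in the eigenvalue pair $(\beta,\gamma)$, the crucial input $g(\phi e_1,e_t)=0$ used at every stage transfers to $g(\phi e_1,e_y)=g(e_2,e_y)=0$ (which holds for $y\ge 3$), and all derivative-of-principal-curvature terms still vanish. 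Tracking the proof of Lemma 3.4 line by line should produce the analogue
\[
(a_1-\alpha)\beta^2=(a_1+\alpha)\beta\gamma+c(\beta+\gamma)-2\gamma\beta^2.
\]

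Subtracting and dividing by $\beta^2-\gamma^2\ne 0$ yields $(a_1-\alpha)(\beta+\gamma)+2\beta\gamma=0$. Combined with the trace constraint $\mathrm{tr}\,A=\beta+\gamma$, which together with our multiplicities $p=n-1$ and $2n-3-p=n-2$ gives $a_1+\alpha=-(n-2)\beta-(n-3)\gamma$, and with (\ref{5}) (the positivity of $h_1^2=(a_1-\beta)(\gamma-a_1)$), the two curvature identities above over-determine the five constants $\alpha,a_1,h_1,\beta,\gamma$. Plugging $a_1-\alpha=-2\beta\gamma/(\beta+\gamma)$ back into (\ref{27}) reduces it to a single polynomial identity in $\beta,\gamma,c$ whose only solutions violate either $\beta\gamma\ne 0$, $\beta\ne\gamma$, or $c\ne 0$; this is the sought contradiction.

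The main obstacle will be the faithful execution of the symmetric sectional-curvature computation. The proof of Lemma 3.4 was long, with many delicate cancellations, and several intermediate vanishings (for instance $g(\nabla_{e_t}e_1,e_x)=0$ and $g(\nabla_{e_t}e_1,e_s)=0$) were proved by separate case analyses using (\ref{7}), (\ref{8}), (\ref{11}). I must verify that the mirrored statements $g(\nabla_{e_y}e_1,e_s)=0$ and $g(\nabla_{e_y}e_1,e_x)=0$ follow from the same identities with indices swapped, paying particular attention to the distinguished vector $e_2=\phi e_1\in T_\beta$ (which has no counterpart in $T_\gamma$) when it appears as the vector $e_x$. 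Once this transfer is established, the algebraic contradiction above is routine.
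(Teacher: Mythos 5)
Your setup (the dimension count $p=n-1$, the use of Lemmas 3.2--3.5 to make $\alpha,a_1,h_1,\beta,\gamma$ constant) is fine and matches what the paper uses. The gap is the claimed ``mirror'' of (\ref{27}). Identity (\ref{27}) comes from $g(R(e_t,e_1)e_1,e_t)$ with $e_t\in T_\gamma$, and that computation collapses only because every relevant connection coefficient vanishes: $\nabla_{e_t}e_1=0$ (its $T_\gamma$-components die by (\ref{11}) since $g(\phi e_t,e_s)=0$ inside $T_\gamma$, its $T_\beta$-components by the $x=2$ case via (\ref{7}),(\ref{8}) and by (\ref{6}),(\ref{9})), and $g(\nabla_{e_1}e_1,\nabla_{e_t}e_t)=0$ because $g(\nabla_{e_t}e_t,e_x)=0$ for all $e_x\in T_\beta$ by (\ref{9}) (distinct eigenvalues). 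None of this transfers to $e_y\in T_\beta$, precisely because the distinguished vector $e_2=\phi e_1$ sits in $T_\beta$: $g(\nabla_{e_y}e_y,e_2)=-g(\nabla_{e_y}\phi e_y,e_1)$ is not killed by (\ref{9}) (same eigenvalue) and is generically nonzero by (\ref{7}),(\ref{8}) because $g(\phi e_y,\phi e_y)=1$; $g(\nabla_{e_1}e_1,e_2)$ is not killed by (\ref{13}),(\ref{17}),(\ref{20}) because $g(\phi e_2,e_1)=-1\neq 0$; and $g(\nabla_{e_y}e_1,\phi e_y)$ with $\phi e_y\in T_\gamma$ does not vanish, since the inhomogeneous terms of (\ref{11}),(\ref{15}) again carry $g(\phi e_y,\phi e_y)=1$. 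So the mirrored sectional-curvature computation picks up extra terms with no counterpart in the $T_\gamma$ case; this is exactly the phenomenon that makes the paper's Lemma 4.2 (where all vectors lie in the eigenspace containing $e_2$) so much longer, cf.\ the nonzero quantities $g(\nabla_{e_1}e_1,e_2)$ and $g(\nabla_{e_j}e_j,e_2)$ in (\ref{34})--(\ref{38}). Your analogue of (\ref{27}) is therefore not obtained ``by symmetry''; as stated it is unproven and quite possibly needs correction terms.

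Even granting the mirror identity, the endgame is not routine. Subtraction gives $(a_1-\alpha)(\beta+\gamma)+2\beta\gamma=0$ (you may only divide by $\beta-\gamma$; that $\beta+\gamma\neq 0$ then follows from $\beta\gamma\neq 0$, so that slip is harmless), and with the trace relation this expresses $a_1\pm\alpha$ through $\beta,\gamma$; substituting back into (\ref{27}) leaves a \emph{single} polynomial equation in the two unknowns $\beta,\gamma$ (with $c$, $n$ as parameters). One equation in two unknowns does not force $\beta\gamma=0$, $\beta=\gamma$ or $c=0$, and the open condition $h_1^2=(a_1-\beta)(\gamma-a_1)>0$ is not shown to exclude its solution set. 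The paper closes the argument with different input: (\ref{16}) and (\ref{17}) give one scalar relation, computing $g(R(e_1,\xi)\xi,e_1)$ two ways (using (\ref{13}),(\ref{17})) gives another, these combine to $c=\beta(\beta+\gamma-\alpha-a_1)$; then $g(R(e_t,e_2)e_2,e_t)$ --- which genuinely vanishes because $e_2\in T_\beta$ and $e_t\in T_\gamma$ have distinct eigenvalues --- yields $c+\beta\gamma=0$, and only then do the trace relation and (\ref{27}) force $h_1^2=0$, a contradiction. You should either carry out the $T_\beta$-plane computation honestly, tracking all the extra terms above, or switch to the mixed computations ($e_1$ with $\xi$, and $e_2$ with $e_t$) that the paper uses.
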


\begin{proof}
Suppose $\beta \neq \gamma$ at a point $x$ of $M$ and therefore in a neighborhood of $x$. We take a local orthonormal basis $\{\xi, e_1,\hdots,e_{2n-2}\}$ of a real hypersurface of $M$ of $M^{n}(c)$ as Section 3. By Lemma 3.4 and Lemma 3.5, we see that $\alpha, a_1, h_1, \beta$ and $\gamma$ are constant and $g(\phi e_x,e_y)=0 $ for any $e_x, e_y \in T_\beta$ and $g(\phi e_s,e_t)=0$ for any $e_s, e_t \in T_\gamma$.

By (\ref{16}) and (\ref{17}), we have
$$(c+\alpha \beta - \beta a_1 + h_1^2)-(a_1 - \beta)g(\nabla_{\xi}e_2,e_1)=0,$$
$$h_1(\alpha - 3\beta) - h_1 g(\nabla_{\xi}e_2, e_1)=0.$$
Since $h_1\neq 0$ and $h_1^2=(a_1 - \beta)(\gamma - a_1)$, we obtain
$$c+2\alpha\beta+3a_1\beta-3\beta^2+a_1\gamma-a_1^2-\beta\gamma-\alpha a_1=0.$$
On the other hand, by a straightforward computation shows that
\begin{eqnarray*}
& &g(R(e_1,\xi)\xi,e_1)=h_1 g(\nabla_{e_1}e_2,e_1)-(a_1 - \beta)g(\nabla_{e_\xi}e_2,e_1)+\beta a_1\\
& &=c+\alpha a_1 - h_1^2.
\end{eqnarray*}
By (\ref{13}) and (\ref{17}), we get
$$g(\nabla_{e_1}e_2,e_1)=\frac{h_1(2\beta + a_1)}{a_1 - \beta}, \hspace{1cm}g(\nabla_{\xi}e_2, e_1)=\alpha - 3\beta.$$
Substituting these equation into the equation above, and using  $h_1^2=(a_1 - \beta)(\gamma - a_1)$, we obtain
$$c+2\alpha a_1 - 2a_1\gamma + 2a_1^2-\beta\gamma-3a_1\beta-\alpha\beta+3\beta^2=0.$$
These equations imply that
$$c=\beta(\beta+\gamma-\alpha-a_1).$$
Now, we compute
$$g(R(e_t,e_2)e_2,e_t)=g(\nabla_{e_t}\nabla_{e_2}e_2,e_t)-g(\nabla_{e_2}\nabla_{e_t}e_2,e_t)-g(\nabla_{[e_t,e_2]}e_2,e_t)$$
for $e_t\in T_{\gamma}$. By (\ref{9}) and (\ref{10}), we have
$$g(\nabla_{e_t}\nabla_{e_2}e_2,e_t)=-g(\nabla_{e_2}e_2,e_1)g(e_1,\nabla_{e_t}e_t)=0.$$
Moreover, (\ref{6}) and (\ref{9}) imply
$$g(\nabla_{e_2}\nabla_{e_t}e_2,e_t)=-g(\nabla_{e_t}e_2,e_1)g(e_1,\nabla_{e_2}e_t).$$
Since $\beta\neq \gamma$, from (\ref{7}) and (\ref{8}), we have
$$g(\nabla_{e_2}e_t,e_1)=0,\ g(\nabla_{e_t}e_2,e_1)=0.$$ 
So we conclude
$$g(\nabla_{e_t}\nabla_{e_2}e_2,e_t)=0.$$
We also have, by (\ref{6}),
$$g(\nabla_{[e_t,e_2]}e_2,e_t)=0.$$
Consequently, we obtain
$$g(R(e_t,e_2)e_2,e_t)=c+\beta\gamma=0.$$
Therefore, we have $\alpha+a_1=\beta+2\gamma$. On the other hand, ${\rm tr}A=\beta+\gamma=\alpha+a_1+(n-1)\beta+(n-2)\gamma$, and hence $\beta+\gamma=0$. Substituting this into (\ref{27}), we get $(a_1-\alpha)\gamma^2=-\beta\gamma^2$. Since $\gamma\neq 0$, we have $\alpha-a_1=\beta$, which implies $2\alpha = 2\beta + 2\gamma= 0$ and $\alpha = 0$. Since $\gamma - a_1=\alpha = 0$, it follows that $h_1^2=(a_1-\beta)(\gamma-a_1)=0$, which is a contradiction. Therefore, we must have $\beta=\gamma$.

\end{proof}

\begin{lemma}
Let $M$ be a non-Hopf real hypersurface of $M^n(c)$ $(n\geq 3, c\neq 0)$. Suppose that the Ricci tensor $S$ satisfies $S(X,Y)=ag(X,Y)$ for any $X, Y\in T_0$, $a$ being a constant. If $\beta = \gamma$, then $\alpha, a_1, h_1 $ and $\beta$  are constant.
\end{lemma}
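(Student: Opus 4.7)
The plan is to show in sequence that $\beta$ is constant, that $\mathrm{tr}A$ and hence $\alpha + a_1$ are constant, and finally that each of $a_1$, $h_1$, $\alpha$ has vanishing derivative along every direction of an orthonormal frame. For the setup, note that since $\beta = \gamma$ every $a_i$ with $i \geq 2$ equals $\beta$, so $\beta$ is a root of the quadratic $X^2 - (\mathrm{tr}A) X + a - (2n+1)c = 0$; combining this with the analogous Ricci equation for $e_1$ and the expansion $\mathrm{tr}A = \alpha + a_1 + (2n-3)\beta$ will produce the basic identity
\[
h_1^2 = (a_1 - \beta)\bigl(\alpha + (2n-4)\beta\bigr), \qquad a_1 \neq \beta,
\]
the inequality following from $h_1 \neq 0$ (non-Hopf).

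First I would show $\beta$ is constant. Applying (\ref{11}) and (\ref{15}) to pairs $e_i, e_j \in T_\beta$ with $i \neq j$, eliminating $g(\nabla_{e_i} e_1, e_j)$ between them, and invoking the $h_1^2$-identity, I expect to obtain
\[
(a_1 - \beta)\bigl(c - (2n-3)\beta^2\bigr)\, g(\phi e_i, e_j) = 0.
\]
The alternative $g(\phi e_i, e_j) = 0$ for every admissible pair can be ruled out by a dimension count: since $\phi e_1 \in T_\beta$ (so after a rotation of the frame $\phi e_1 = e_2$) and $g(\phi e_i, e_i) = 0$, vanishing of all these inner products would force each $\phi e_i$ with $i \geq 3$ into $\mathrm{span}\{e_1\}$, contradicting the injectivity of $\phi|_H$ since $n \geq 3$ supplies at least two such indices. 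Hence $c = (2n-3)\beta^2$, so $\beta$ is locally constant (and nonzero, as $c \neq 0$).

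Once $\beta$ is constant, the quadratic gives $\mathrm{tr}A = \beta + (a - (2n+1)c)/\beta$ constant, so $\alpha + a_1 = \mathrm{tr}A - (2n-3)\beta$ is constant. For each $j \geq 3$ the vanishing $g(\phi e_j, e_1) = 0$ collapses (\ref{12}), (\ref{13}), (\ref{16}), (\ref{17}) into a homogeneous linear system forcing $e_j a_1 = e_j h_1 = e_j \alpha = 0$. Then (\ref{19}), (\ref{28}) together with the constancy of $\alpha + a_1$ give $e_1 h_1 = \xi a_1$ and $\xi h_1 = -e_1 a_1$; differentiating the relation $h_1^2 + (a_1 - \beta)^2 = (a_1 - \beta) M$ (with $M := \mathrm{tr}A - 2\beta$ a nonzero constant) in the $e_1$ and $\xi$ directions and eliminating should yield $M^2 (\xi a_1)(\xi h_1) = 0$, after which a short case analysis gives $e_1 a_1 = e_1 h_1 = \xi a_1 = \xi h_1 = 0$.

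The main obstacle will be the $e_2$-direction, because $\phi e_1 = e_2$ forces $g(\phi e_2, e_1) = -1$, keeping (\ref{12}), (\ref{13}), (\ref{16}), (\ref{17}) for $i = 2$ inhomogeneous; by themselves these four equations form a $4 \times 4$ linear system in $e_2 a_1$, $e_2 h_1$, $g(\nabla_{e_1} e_2, e_1)$, $g(\nabla_\xi e_2, e_1)$ that is insufficient to pin down the derivatives. My plan is to extract the missing relation from Gauss: I will compute a sectional curvature such as $g(R(e_1, e_2) \xi, e_2) = -h_1 \beta$ both directly by Gauss and by the Levi--Civita expression, in which the two unknown connection components above and the auxiliary $g(\nabla_{e_1} e_1, e_2)$ (accessible via (\ref{20})) appear. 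Combining the resulting extra equation with the $e_2$-derivative of $h_1^2 + (a_1 - \beta)^2 = (a_1 - \beta) M$ should close the system and force $e_2 a_1 = e_2 h_1 = 0$. Since $\alpha = \mathrm{const} - a_1$, this completes the proof that $\alpha, a_1, h_1, \beta$ are all constant.
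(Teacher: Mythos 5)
Your first half is sound and in fact coincides with the paper's argument: eliminating $g(\nabla_{e_i}e_1,e_j)$ between (\ref{11}) and (\ref{15}) for $e_i,e_j\in T_\beta$, using $h_1^2=(a_1-\beta)(\mathrm{tr}A-a_1-\beta)$ and the existence of a pair with $g(\phi e_i,e_j)\neq 0$, gives $c=(2n-3)\beta^2$, hence $\beta\neq 0$ is constant and $\mathrm{tr}A$, $\alpha+a_1$ are constant. Your handling of the directions $e_j$ ($j\geq 3$), $e_1$ and $\xi$ also works, with one small repair: (\ref{12}), (\ref{13}), (\ref{16}), (\ref{17}) alone are four homogeneous equations in the five unknowns $e_jh_1,e_ja_1,e_j\alpha,g(\nabla_{e_1}e_j,e_1),g(\nabla_\xi e_j,e_1)$ and do not force vanishing; you must adjoin $e_j(\alpha+a_1)=0$, after which $\{(a_1-\beta)^2+h_1^2\}g(\nabla_{e_1}e_j,e_1)=0$ and everything follows.

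The genuine gap is exactly the step you flag as the main obstacle, the $e_2$-direction, and the extra relation you propose does not exist. Any component $g(R(X,Y)\xi,Z)$ computed through the Levi--Civita connection reduces, via $\nabla_X\xi=\phi AX$ and $(\nabla_X\phi)Y=\eta(Y)AX-g(AX,Y)\xi$, to $(\nabla_X\phi)AY-(\nabla_Y\phi)AX+\phi\bigl((\nabla_XA)Y-(\nabla_YA)X\bigr)$, i.e.\ to algebra in $A$ plus the Codazzi equation; concretely, equating the connection expression for $g(R(e_1,e_2)\xi,e_2)$ with the Gauss value $-h_1\beta$ yields $(a_1-\beta)g(\nabla_{e_1}e_2,e_1)+e_2a_1=h_1(2\beta+a_1)$, which is precisely (\ref{13}) for $i=2$, and (\ref{20}) at $i=2$ is likewise a linear combination of (\ref{12}) and (\ref{16}). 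So no new equation appears. Moreover the inhomogeneous system (\ref{12}), (\ref{13}), (\ref{16}), (\ref{17}) at $i=2$ (with $e_2\alpha=-e_2a_1$) has determinant $\pm\{(a_1-\beta)^2+h_1^2\}\neq 0$, so it already determines $e_2a_1$, $e_2h_1$, $g(\nabla_{e_1}e_2,e_1)$, $g(\nabla_\xi e_2,e_1)$ as explicit expressions in $\alpha,a_1,h_1,\beta$; no Codazzi-derived linear relation will make these expressions vanish, and the conclusion cannot be reached by ``closing a linear system''. What is needed is an independent constraint from an intrinsic curvature computation in a plane inside $H$: the paper obtains it by evaluating $g(R(e_j,e_1)e_1,e_j)=c+a_1\beta$ for $j>2$ through the connection (after first proving $g(\nabla_{e_1}e_1,e_j)=0$ for $j>2$ and $g(\nabla_{e_j}e_j,e_2)=-\beta h_1/(a_1-\beta)$), then substituting the value of $g(\nabla_{e_1}e_2,e_1)$ extracted from (\ref{12}), (\ref{13}) and $2h_1(e_2h_1)=(\mathrm{tr}A-2a_1)(e_2a_1)$; the result is not $e_2a_1=0$ directly but a polynomial identity which, with $\alpha=k-a_1$ and $k$ constant, is a quadratic in $a_1$ with constant coefficients and nonzero leading coefficient, forcing $a_1$, hence $\alpha$ and $h_1$, to be constant. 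Without such a curvature-in-$H$ input your outline does not establish the constancy of $a_1$, so the lemma is not proved as written.
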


\begin{proof}
We suppose that $\beta= \gamma$.
We see that $\beta$ is a root of  a quadratic equation $X^2-({\rm{tr}}A) X+a-(2n+1)c=0$. We denote $\nu$ another root of this equation. We remark that $\beta\nu=a-(2n+1)c$ and $\beta+\nu={\rm tr}A$. By the equation
\begin{equation}\label{31}
g(Se_1,e_1)=(2n+1)c+ ({\rm{tr}}A) a_1 - (a_1^2 + h_1^2)=a,
\end{equation}
we have
\begin{equation}\label{32}
h_1^2= (a_1-\beta)({\rm{tr}}A - (a_1+\beta))=(a_1-\beta)(\nu-a_1).
\end{equation}
By (\ref{11}) and (\ref{15}), for any $e_i,\ e_j\in T_\beta$, we obtain
\begin{eqnarray*}
& &(a_1-\beta)g(\nabla_{e_i}e_1, e_j) + \beta h_1 g(\phi e_i,e_j)=0,\\
& &(c+\beta\alpha-\beta^2)g(\phi e_i,e_j) + h_1g(\nabla_{e_i}e_1, e_j)=0.
\end{eqnarray*}
These equations and (\ref{32}) imply
\begin{equation*}
(\beta(\nu-a_1) - (c+\beta\alpha -\beta^2))g(\phi e_i,e_j)=0.
\end{equation*}
There exist $e_i$ and $e_j$ that satisfy $g(\phi e_i, e_j)\neq 0$, so we have
\begin{equation}\label{33}
c=\beta(\beta+\nu-(a_1+\alpha)).
\end{equation}
Using ${\rm{tr}}A = a_1+\alpha+(2n-3)\beta$, we obtain
\begin{equation*}
c= \beta({\rm{tr}}A - a_1-\alpha) =(2n-3)\beta^2.
\end{equation*}
So we see that $\beta$ is constant. Since $\beta\nu=a-(2n+1)c$ is constant, $\nu$ is also constant.

In the following we put $\phi e_1=e_2$. 

When $j>2$, by the equation of Gauss,
\begin{eqnarray*}
& &g(R(e_j,e_1)e_1,e_j)\\
 & &=g(\nabla_{e_j}\nabla_{e_1}e_1, e_j) - g(\nabla_{e_1}\nabla_{e_j}e_1, e_j) - g(\nabla_{[e_j,e_1]}e_1,e_j)\\
  & &=c+a_1\beta.
\end{eqnarray*}
Since $a_1+\alpha$ is constant from (\ref{33}), by (\ref{13}) and (\ref{17}), we have
\begin{equation*}
0=e_j(\alpha + a_1)=h_1g(\nabla_{\xi}e_1, e_j) + (a_1-\beta)g(\nabla_{e_1}e_1,e_j).
\end{equation*}
On the other hand, by (\ref{20}),
\begin{equation*}
-(a_1-\beta)g(\nabla_{\xi}e_1, e_j) + h_1 g(\nabla_{e_1}e_1, e_j)=0.
\end{equation*}
From these equations, we obtain
\begin{equation*}
\{(a_1-\beta)^2 + h_1^2\} g(\nabla_{e_1}e_1, e_j)=0,
\end{equation*}
and hence 
\begin{equation}\label{34}
g(\nabla_{e_1}e_1, e_j)=0
\end{equation}
for any $j>2$. So we have
\begin{eqnarray*}
& &g(\nabla_{e_j}\nabla_{e_1}e_1, e_j)=-g(\nabla_{e_1}e_1, \nabla_{e_j}e_j)\\
 & &= -g(\nabla_{e_1}e_1, \xi)g(\xi, \nabla_{e_j}e_j) - g(\nabla_{e_1}e_1,e_1)g(e_1,\nabla_{e_j}e_j)\\
 & &\quad -g(\nabla_{e_1}e_1,e_2)g(e_2, \nabla_{e_j}e_j) - \sum_{k>2} g(\nabla_{e_1}e_1, e_k) g(e_k, \nabla_{e_j}e_j).
\end{eqnarray*}
From (\ref{34}) and $g(\nabla_{e_1}e_1, \xi)=0$, we obtain
\begin{equation*}
g(\nabla_{e_j}\nabla_{e_1}e_1, e_j) = -g(\nabla_{e_1}e_1, e_2)g(e_2, \nabla_{e_j}e_j).
\end{equation*}
Since $\phi e_1=e_2$, we have
\begin{equation*}
g(e_2, \nabla_{e_j}e_j)=g(\phi e_1, \nabla_{e_j}e_j)=-g(e_1, \nabla_{e_j}\phi e_j).
\end{equation*}
By (\ref{11}), 
\begin{equation*}
(a_1-\beta)g(\nabla_{e_j}e_1, \phi e_j) + \beta h_1 g(\phi e_j, \phi e_j)=0.
\end{equation*}
Thus we have
\begin{equation*}
g(\nabla_{e_j}e_j,e_2)=-\frac{\beta h_1}{a_1-\beta}.
\end{equation*}
From these equations, we obtain
\begin{equation*}
g(\nabla_{e_j}\nabla_{e_1}e_1, e_j)=\frac{\beta h_1}{a_1-\beta} g(\nabla_{e_1}e_1, e_2).
\end{equation*}
Next we compute $g(\nabla_{e_1}\nabla_{e_j}e_1, e_j)$. Since $\beta$ is constant and $h_1^2=(a_1-\beta)(\gamma-a_1)\neq 0$, by (\ref{10}), it follows that
\begin{equation}\label{35}
g(\nabla_{e_j}e_1, e_j)=0.
\end{equation}
Moreover, using (\ref{11}), 
\begin{eqnarray*}
(a_1-\beta)g(\nabla_{e_j}e_1, e_2) + \beta h_1 g(\phi e_j, e_2)=0,\\
(a_1-\beta)g(\nabla_{e_j}e_1, e_k) + \beta h_1 g(\phi e_j, e_k)=0
\end{eqnarray*}
for $k\geq 3$. So we have $g(\nabla_{e_j}e_1, e_2)=0$. Thus, taking a suitable orthonormal basis $\{e_3,\cdots,e_{2n-2}\}$, 
\begin{equation}\label{36}
g(\nabla_{e_j}e_1, \phi e_j)=\frac{-\beta h_1}{a_1-\beta}.
\end{equation}
So we have
\begin{eqnarray*}
& &g(\nabla_{e_1}\nabla_{e_j}e_1, e_j)\\
& &=-g(\nabla_{e_j}e_1, \nabla_{e_1}e_j)\\
& &=-g(\nabla_{e_j}e_1, \xi)g(\xi, \nabla_{e_1}e_j) - g(\nabla_{e_j} e_1, e_1) g(e_1, \nabla_{e_1}e_j)\\
& &\quad -g(\nabla_{e_j}e_1, e_2)g(e_2, \nabla_{e_1}e_j) - \sum_{k\geq 3} g(\nabla_{e_j}e_1, e_k) g(e_k, \nabla_{e_1}e_j)\\
& &=\frac{\beta h_1}{a_1-\beta} g(\phi e_j, \nabla_{e_1}e_j).
\end{eqnarray*}
Similar computation using (\ref{11}) induces
\begin{equation*}
g(\nabla_{e_k}e_1, e_j)=-\frac{\beta h_1}{a_1-\beta}g(\phi e_k,e_j).
\end{equation*}
On the other hand,  we have
\begin{eqnarray*}
& &g(\nabla_{[e_j, e_1]}e_1,e_j)\\
& &=g(\nabla_{\xi}e_1, e_j) g(\xi, [e_j, e_1]) + g(\nabla_{e_1}e_1, e_j)g(e_1, [e_j, e_1])\\
& &\quad \sum_{k>1} g(\nabla_{e_k}e_1, e_j)g(e_k, [e_j,e_1]).
\end{eqnarray*}
Since $g(\xi, [e_j, e_1])=0$ and $g(\nabla_{e_1}e_1,e_j)=0$ by (\ref{34}),
\begin{equation}\label{37}
g(\nabla_{[e_j,e_1]}e_1, e_j)=\frac{\beta h_1}{a_1-\beta} g(\phi e_j, [e_j,e_1]).
\end{equation}
From these, we obtain
\begin{eqnarray}\label{38}
g(R(e_j,e_1)e_1,e_j)&=&\frac{\beta h_1}{a_1-\beta}g(\nabla_{e_1}e_1, e_2) + \left( \frac{\beta h_1}{a_1-\beta} \right)^2 \\
&=&c+a_1\beta.\nonumber
\end{eqnarray}

Next we compute $g(\nabla_{e_1}e_1, e_2)$. Since ${\rm{tr}}A=\beta + \nu$ is constant, by (\ref{31}), we have
\begin{equation*}
2h_1(e_2h_1)=({\rm{tr}}A-2a_1)(e_2a_1).
\end{equation*}
Moreover, (\ref{12}) and (\ref{13}) imply that
\begin{eqnarray*}
& &(2c-2a_1\beta + \alpha(\beta+a_1))g(\phi e_2, e_1) + h_1 g(\nabla_{e_1}e_2, e_1) + (e_2h_1)=0,\\
& &h_1(2\beta+ a_1)g(\phi e_2, e_1) + (a_1-\beta)g(\nabla_{e_1}e_2, e_1) + (e_2a_1)=0.
\end{eqnarray*}
From these equations, we obtain
\begin{eqnarray*}
& &h_1(4c -4a_1\beta + 2\alpha(\beta + a_1) - (2\beta + a_1)({\rm{tr}}A -2a_1))g(\phi e_2, e_1)\\
& &\quad +(2h_1^2 - (a_1-\beta)({\rm{tr}}A-2a_1))g(\nabla_{e_1}e_2, e_1) =0.
\end{eqnarray*}
By a straightforward computation using (\ref{32}), (\ref{33})  and ${\rm{tr}}A=\beta+\nu$, we have
\begin{equation*}
g(\nabla_{e_1}e_2, e_1)=\frac{h_1(2\beta^2 + 2\beta\nu - 2\beta \alpha - 5a_1\beta + 2a_1\alpha -a_1\nu + 2a_1^2}{(a_1-\beta)(\nu-\beta)}.
\end{equation*}
By (\ref{38}), we obtain
\begin{eqnarray*}
& &g(R(e_j,e_1)e_1,e_j)\\
& &=\frac{\beta(\nu-a_1)(3\beta^2 + \beta\nu -2\beta\alpha - 5a_1\beta + 2a_1\alpha -a_1\nu +2a_1^2)}{
(a_1-\beta)(\beta-\nu)}\\
& &=c+\beta a_1.
\end{eqnarray*}
Hence we have
\begin{eqnarray*}
0&=&\beta\{4a_1\beta^2- \beta^3 - 3a_1\beta\alpha + \alpha\beta^2 - a_1\alpha\nu + \alpha\beta\nu\\
 & &-3\beta^2\nu + 6a_1\beta\nu - 3a_1^2\nu  -5a_1^2\beta +2a_1^2\alpha + 2a_1^3\}.
\end{eqnarray*}
We suppose $\beta\neq0$. Since $a_1+\alpha={\rm{tr}}A-(2n-3)\beta$, we put $\alpha=k-a_1$. We remark that $k$ is constant. Then  we have 
\begin{eqnarray*}
& &2a_1^2(-\beta-\nu +k) + a_1(3\beta^2-3\beta k -k\nu + 5\beta\nu)\\
& &\quad -\beta^3 + k\beta^2 + k\beta\nu -3\beta^2\nu =0.
\end{eqnarray*}
By  (\ref{33}), $\beta + \nu -k \neq 0$, hence $a_1$ is a root of the quadratic equation whose coefficients are all constant. So $a_1$, $\alpha$, $h_1$ are constant.

Finally we consider the case that $\beta=0$. Since $\beta^2-(\rm{tr}A)\beta+a-(2n+1)c=0$, we have $(2n+1)c=a$. Thus, from (\ref{31}), $0=({\rm tr}A)a_1 - (a_1^2 +h_1^2) = \alpha a_1 - h_1^2$. From this $a_1 \neq 0$. Moreover, (\ref{7}) implies $g(\nabla_{e_i}e_j, e_1) = g(\nabla_{e_j}e_i, e_1)$. Thus, by (\ref{8}), we have $c=0$. This is a contradiction.

\end{proof}

\begin{theorem} Let $M$ be a real hypersurface of a complex space form $M^n(c)$, $c\neq 0$, $n\geq 3$. The Ricci tensor $S$ of $M$ satisfies $S(X,Y)=ag(X,Y)$ for any $X,Y\in H$, $a$ being a constant, if and only if $M$ is a pseudo-Einstein real hypersurface.\end{theorem}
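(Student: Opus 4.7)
The ``if'' direction is immediate: if $S = ag + b\,\eta\otimes\eta$ and $X, Y \in H$, then $\eta(X) = \eta(Y) = 0$ kills the second term, yielding $S(X, Y) = ag(X, Y)$ with the same constant $a$.

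For the ``only if'' direction, my plan is to split on whether $M$ is Hopf. In the Hopf case $A\xi = \alpha\xi$, the shape operator preserves $H$, so for $X \in H$ a direct computation from~(\ref{1}) with $Y = \xi$ gives $S(X, \xi) = 0$ (using $\eta(X) = 0$, $g(AX, \xi) = \alpha g(X, \xi) = 0$, and $g(AX, A\xi) = 0$). Combined with the hypothesis, this yields $S(X, Y) = ag(X, Y) + b\,\eta(X)\eta(Y)$ with $b = S(\xi, \xi) - a$. Constancy of $b$ then follows from the standard fact that $\alpha$ is constant on any Hopf hypersurface of a non-flat complex space form, together with~(\ref{1}) evaluated at $(\xi, \xi)$ using that $\mathrm{tr}\,A$ is determined by $a$ and $c$ through the quadratic~(\ref{3}) applied to the eigenvalues on $H$.

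The substantive part is to rule out the non-Hopf case. Working in the open set where $A\xi \ne \alpha\xi$, the lemmas already in the excerpt give a sharp structural picture: Lemmas 3.2--3.5 reduce the case $\beta \ne \gamma$ to a constant-principal-curvature setting with $\beta\gamma \ne 0$, Lemma 4.1 forces $\beta = \gamma$, and Lemma 4.2 yields that $\alpha, a_1, h_1, \beta$ are all constant, with $c = (2n-3)\beta^2$ from~(\ref{33}) and $h_1^2 = (a_1-\beta)(\nu-a_1)$ from~(\ref{32}), together with $\beta \ne 0$. At this stage every principal curvature of $M$ is constant, so the Codazzi identities~(\ref{6})--(\ref{20}) reduce to purely algebraic constraints on the connection coefficients $g(\nabla_{e_i} e_j, e_k)$. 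The plan is then to compute $g(R(e_j, e_1) e_1, e_j)$ for some $e_j \in T_\beta$ with $\phi e_j \ne e_2$ in two ways: once via the Gauss equation, giving $c + a_1\beta$; once by expanding $R$ through $\nabla$ and substituting the explicit connection coefficients available from the Codazzi system. Equating the two expressions, and using $c = (2n-3)\beta^2$, should produce an algebraic identity in $\alpha, a_1, h_1, \beta$ that forces either $h_1 = 0$ (contradicting the non-Hopf hypothesis, since $A\xi - \alpha\xi = h_1 e_1$) or $\beta = 0$ (already excluded inside Lemma 4.2).

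The main obstacle lies in this last polynomial step: the rigidity imposed by ``all principal curvatures constant'' makes many natural sectional-curvature computations collapse to tautologies. The critical point will be choosing a test plane that actually couples the connection coefficient $g(\nabla_{e_1} e_1, e_2)$, since this is the only term carrying the non-trivial $h_1$-mixing between $L(x)$ and $T_\beta$ into the second covariant derivative. If a single sectional curvature proves insufficient, I would complement it with the Gauss-equation identity on the plane spanned by $e_2$ and some $e_t \in T_\gamma$, where Lemma 3.2 ensures that $\phi$-couplings inside each eigenspace vanish, thereby overdetermining the system and extracting the contradiction. Ruling out the non-Hopf case in this way completes the proof that $M$ is pseudo-Einstein.
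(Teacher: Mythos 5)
Your architecture matches the paper's (trivial ``if'' direction, Hopf case via $S(X,\xi)=0$, non-Hopf case excluded through Lemmas 3.2--4.2), but the one step that is actually new in the theorem's proof --- producing a contradiction once $\beta=\gamma$ and $\alpha, a_1, h_1, \beta$ are constant --- is exactly the step you leave as a plan, and the plan as stated does not close. The sectional curvature $g(R(e_j,e_1)e_1,e_j)=c+a_1\beta$, expanded through the connection coefficients, is precisely identity (\ref{38}), which has already been spent inside the proof of Lemma 4.2: there it is the relation that makes $a_1$ a root of a quadratic with constant coefficients. Re-deriving it afterwards gives one polynomial relation among the constants, and you offer no algebra showing that this relation, together with $c=(2n-3)\beta^2$ and $h_1^2=(a_1-\beta)(\nu-a_1)$, forces $h_1=0$ or $\beta=0$; nothing in Lemma 4.2 suggests it does, since those constraints were jointly used there without contradiction. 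Your fallback --- a Gauss-equation identity on the plane spanned by $e_2$ and some $e_t\in T_\gamma$, ``where Lemma 3.2 ensures the $\phi$-couplings vanish'' --- is vacuous at this stage: after Lemma 4.1 we have $\beta=\gamma$, so there is no separate $T_\gamma$, Lemma 3.2 (whose hypothesis is $\beta\neq\gamma$) no longer applies, and in fact in the $\beta=\gamma$ regime the $\phi$-couplings inside $T_\beta$ are nonzero --- that is what produces (\ref{33}).

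What the paper actually uses at this point is a constraint absent from your proposal: the $\xi$-direction Codazzi equations (\ref{16}) and (\ref{17}) applied to $e_2=\phi e_1$. Eliminating $g(\nabla_\xi e_2,e_1)$ between them (all derivative terms vanish by constancy, $h_1\neq0$) gives (\ref{41}), $c+\beta\alpha-\beta a_1+h_1^2-(\alpha-3\beta)(a_1-\beta)=0$; combined with $c=(2n-3)\beta^2$ and $h_1^2=(a_1-\beta)(\alpha+(2n-4)\beta)$ from (\ref{33}) and (\ref{42}), this yields $\beta(2\beta-\alpha-(2n-2)a_1)=0$, whence $h_1^2=-(2n-2)(a_1-\beta)^2\le0$ if $\beta\neq0$ and $c=0$ if $\beta=0$ --- a contradiction either way. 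Without this (or some equivalent additional Codazzi relation) your endgame is an unverified hope that ``the algebra works out.'' A smaller point: in the Hopf case you still owe the constancy of $b=S(\xi,\xi)-a$; constancy of $\alpha$ alone does not give constancy of $\mathrm{tr}A$, and the quadratic (\ref{3}) determines $\mathrm{tr}A$ only through the (as yet unknown) principal curvatures, so a short additional argument is needed --- though to be fair the paper also only asserts this case.
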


\begin{proof}

We suppose that $M$ is a non-Hopf real hypersurface  $M$ of $M^n(c)$, $n\geq 3$, $c\neq 0$ whose Ricci tensor $S$ satisfies $S(X,Y)=ag(X,Y)$ for any $X, Y\in H$, $a$ being a constant. Then Lemma 4.1 and Lemma 4.2 imply that $\beta = \gamma$ and  $\alpha, a_1, h_1$ and $\beta$ are constant. From (\ref{16}) and (\ref{17}), we have

\begin{eqnarray}
& &(c +\beta\alpha - \beta a_1 + h_1^2)g(\phi e_i,e_1) + (a_1-\beta)g(\nabla_{\xi}e_i,e_1)=0,\label{39}\\
& &h_1 (\alpha - 3\beta) g( e_1,\phi e_i) + h_1 g(\nabla_{\xi}e_i, e_1) =0.\label{40}
\end{eqnarray}
From these, using $h_1\neq 0$, we obtain

\begin{eqnarray}
c+\beta\alpha -\beta a_1 + h_1^2 - (\alpha - 3\beta)(a_1 - \beta) = 0.\label{41}
\end{eqnarray}
By (\ref{33}), we also have
$$c=(2n-3)\beta^2,$$
\begin{equation}
h_1^2=(a_1-\beta)({\rm tr}A - a_1 - \beta)=(a_1 - \beta)(\alpha + (2n-4)\beta).\label{42}
\end{equation}
Substituting these into the equation above, we have
$$\beta(2\beta - \alpha - (2n-2)a_1)=0.$$
If $\beta \neq 0$, then $h_1^2 = -(2n-2)(a_1-\beta)^2 \leq 0$. Thus we have $h_1 = 0$, which is a contradiction. So we see that $\beta = 0$. Then $h_1^2 - \alpha a_1= 0$ by (\ref{42}). On the other hand, from (\ref{41}),  $c+h_1^2 - \alpha a_1= c = 0$. This is also a contradiction. Therefore,  $M$ is a Hopf hypersurface. When $M$ is Hopf hypersurface that satisfies $S(X,Y)=ag(X,Y)$ for any $X, Y\in H$, then $M$ is pseudo-Einstein.

Conversely, if $M$ is pseudo-Einstein, then $S(X,Y)=ag(X,Y) + b\eta(X)\eta(Y)$, $a$ and $b$ being constant. Then $S$ satisfies $S(X,Y)=ag(X,Y)$ for any $X, Y\in H$.

\end{proof}

{\bf Remark.}
In Theorem 3.1 of \cite{MaK1}, we proved that if a real hypersurface $M$ of a complex space form $M^{n}(c)$, $c\ne 0$, with constant proncipal curvatures satisfies $S(X,Y)=ag(X,Y)$, $X, Y\in H$, $a$ being a function, then $M$ is a pseudo-Einstein real hypersurface.

If principal curvatures are constant, then the mean curvature vector field $ {\rm tr}A$ is also constant. Then we easily see that the function $a$ is constant. So, our result is an extension of the therem in  \cite{MaK1}.

\bibliographystyle{amsplain}

\end{document}